\documentclass[12pt]{amsart}
\usepackage{amssymb,amsmath,amsthm,amscd,mathrsfs}
\usepackage[utf8]{inputenc}
\usepackage{xcolor,graphicx,enumerate}
\usepackage{hyperref,multicol,mdframed}
\usepackage[all]{xy}

\usepackage{newtxtext,newtxmath}

\newcounter{results}
\newtheorem{lemma}[results]{Lemma}
\newtheorem{example}[results]{Example}
\newtheorem{definition}[results]{Definition}
\newtheorem{remark}[results]{Remark}
\newtheorem{theorem}[results]{Theorem}
\newtheorem*{theorema}{Theorem A}
\newtheorem*{theoremb}{Theorem B}
\newtheorem*{theoremc}{Theorem C}
\newtheorem{proposition}[results]{Proposition}
\newtheorem{corollary}[results]{Corollary}

\newcommand{\diffto}{\xrightarrow{\raisebox{-0.2 em}[0pt][0pt]{\smash{\ensuremath{\sim}}}}}
\newcommand{\rmap}{\longrightarrow}

\newcommand{\acts}{\curvearrowright}
\newcommand{\X}{\mathfrak{X}}

\newcommand{\dd}{\mathrm{d}}

\newcommand{\pr}{\mathrm{pr}}

\renewcommand{\dd}{\mathrm{d}}

\newcommand{\id}{\mathrm{id}}

\makeatletter
\def\smallunderbrace#1{\mathop{\vtop{\m@th\ialign{##\crcr
   $\hfil\displaystyle{#1}\hfil$\crcr
   \noalign{\kern3\p@\nointerlineskip}%
   \tiny\upbracefill\crcr\noalign{\kern3\p@}}}}\limits}
\makeatother

\usepackage{geometry}
\usepackage{xparse,pifont}

\newcounter{backups} 

\hypersetup{
        colorlinks=true, 
        linkcolor=blue,  
        citecolor=red, 
        urlcolor=cyan,   
        filecolor=magenta 
    }

\usepackage{cancel}

\title{Homotopy of representations up to homotopy}

\author[O. Brahic]{Olivier Brahic}
\address{Departamento de Matem\'atica, UFPR, Curitiba-PR}
\email{brahicolivier@gmail.com}

\author[P. Frejlich]{Pedro Frejlich}
\address{Departamento de Matem\'atica Pura e Aplicada, UFRGS, Porto Alegre-RS}
\email{frejlich.math@gmail.com}

\newcounter{litreview} 

\newcommand{\lit}{%
  \refstepcounter{litreview}
  \textbf{\Roman{litreview})}
  \ 
}

\begin{document}

\begin{abstract}
We propose a general definition of morphism of representations up to homotopy of Lie algebroids, which is be mandated by the requirement that taking cohomology with coefficients in a representation up to homotopy behave functorially. With this definition in place, we show that such representations behave in a similar way to honest representations as far as homotopy invariance of cohomology and local triviality of smooth families are concerned, and for essentially the same reasons as in the classical case.
\end{abstract}

\dedicatory{To Rui Loja Fernandes, with warm thanks on the occasion of his 60th birthday.}
\maketitle

\tableofcontents

\section{Introduction}
Homotopy invariance of de Rham cohomology is a basic fact of life: the endpoints of a smooth homotopy
\begin{align*}
 h : M \times I \to N
\end{align*}
induce the same maps in de Rham cohomology:
\begin{align*}
 h_0^* = h_1^* : \mathrm{H}(N) \to \mathrm{H}(M).
\end{align*}
There are important variations on this construction, allowing coefficients to be taken in a flat vector bundle -- in fact, a representation of the tangent bundle -- or even in a local system, all of which enjoy the property of homotopy invariance.

The notion of \emph{Lie algebroid} simultaneously generalizes smooth manifolds and Lie
algebras. Lie algebroids turn out to be useful in order to study, in a unified way, foliations, Poisson manifolds, and infinitesimal actions of Lie algebras. A Lie algebroid $A \Rightarrow M$ can be defined in terms of a de Rham-like differential $\mathrm{d}_A : \Omega(A) \to \Omega(A)$ on the space of forms $\Omega(A):=\Gamma(\wedge A^*)$ on $A$, and morphisms are those vector bundle maps $\phi : A \to B$ whose induced map on forms $\phi^* : \Omega(B) \to \Omega(A)$ is a cochain map.

It was already observed in the foundational \cite{CrainicFernandes_annals} that the correct notion of homotopy for Lie algebroids is internal --- namely, a morphism $\phi : A \times TI \to B$, rather than a smooth family of Lie algebroids $A \to B$. For such a homotopy, the maps in cohomology
\begin{align*}
 \phi_0^* = \phi_1^* : \mathrm{H}(B) \to \mathrm{H}(A).
\end{align*}
induced by the endpoints $\phi_0,\phi_1 : A \to B$ coincide \cite{Frejlich,Brahic20,Gengoux_Lavau_Strobl,Jotz_Marchesini}. This can be stated more generally for cohomology with coefficients in a representation $\nabla : B \curvearrowright E$ of $B$, which is likewise described by a cochain structure on $\Omega(B;E):=\Omega(B) \otimes \Gamma(E)$: given a Lie homotopy $\phi : A \times TI \to B$, there is an induced isomorphism of between the pullback representations $\psi : \phi_0^*(E) \diffto \phi_1^*(E)$, for which
\begin{align*}
 \xymatrix{
 & \mathrm{H}(B,E) \ar[dl]_{\phi_0^*} \ar[dr]^{\phi_1^*} & \\
 \mathrm{H}(A,\phi_0^*(E)) \ar[rr]_{\psi}^{\simeq} & & \mathrm{H}(A,\phi_1^*(E))
 }
\end{align*}
commutes. 

Unfortunately, as is long known, the notion of representation of Lie algebroids is too strict. In particular, there is in general no ``adjoint representation'' of a Lie algebroid. This motivated Abad and Crainic \cite{Abad_Crainic} to introduce a more flexible notion of \emph{representation up to homotopy} (or \emph{ruth} for short)
\begin{align*}
 \mathscr{R} \ : \ A \stackrel{\mathrm{D}}{\curvearrowright} \mathcal{E}.
\end{align*}
Concretely, a ruth on $A$ equips the space $\Omega(A;\mathcal{E})$ of forms on $A$ with values in a \emph{graded} vector bundle $\mathcal{E}$ with the structure of a DGA module over the DGA $\Omega(A)$ of forms on $A$. This makes it possible to associate with every Lie algebroid an ``adjoint'' representation up to homotopy. 

For actual representations, there is a semidirect construction which assigns, to a representation $\nabla : A \curvearrowright E$, a Lie algebroid $\mathcal{A}_{\nabla} = A \oplus E$ over $A$. For ruths, no such semidirect construct exists in the category of Lie algebroids, and it would be reasonable to expect that generalizing the results above for representations up to homotopy might entail a fundamentally new argument.

The main goal of the present note is to show that the main gist of the arguments in \cite{Frejlich} carries over to the setting of representations up to homotopy, generalizing the results mentioned above. The actual implementation of the argument turns out to be relatively straightforward as long as one interprets \emph{morphisms} and \emph{flows} appropriately.

\vspace*{0.2cm}
\begin{mdframed}
 A morphism between ruths
 \begin{align*}
  & \mathscr{R}_A \ : \ A \stackrel{\mathrm{D}_{\mathscr{R}_A}}{\curvearrowright} \mathcal{E}
  & \mathscr{R}_B \ : \ B \stackrel{\mathrm{D}_{\mathscr{R}_B}}{\curvearrowright} \mathcal{F}
 \end{align*}
is a morphism of modules of DGAs
 \begin{align*}
  \Phi : \Omega(B;\mathcal{F}^*) \to \Omega(A;\mathcal{E}^*)
 \end{align*}
 over the DGA map given by a morphism of Lie algebroids $\phi : A \to B$.
\end{mdframed}
\vspace*{0.2cm}

The choice of dual coefficients is not merely a matter of convention; it is forced by
functoriality. Just as for ordinary representations, morphisms naturally induce pullback maps on cohomology only after passing to the dual representation.

This definition is in a sense \emph{dual} to the existing definitions of morphism of ruths, and it entails regarding forms with coefficients in the \emph{dual} ruth $\mathcal{E}^*$ as the basic object\footnote{See Remark \ref{rem : dual coefficients} and comments I)-III) in Section \ref{sec : Remarks} for a detailed discussion.}. That is a point of view we espouse and advocate. A strong argument in its favor is that, in this setting, homotopy invariance takes its classical form\footnote{Without dualizing the coefficient ruth, morphisms induce relations instead of maps.}:

\vspace*{0.2cm}
\begin{mdframed}
 \begin{theorema}
 The endpoints of a morphisms of ruths $\Phi : \mathscr{R}_A \times TI \to \mathscr{R}_B$ induce the same map in cohomology:
 \begin{align*}
  \Phi_0^* = \Phi_1^* : \mathrm{H}(\mathscr{R}_B^*) \to \mathrm{H}(\mathscr{R}_A^*).
 \end{align*}
\end{theorema}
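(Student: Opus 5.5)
The plan is to reduce the statement to the classical homotopy formula for forms on $A\times TI$, i.e.\ integration along the fibre of $I$. First, I will unpack what a morphism $\Phi : \mathscr{R}_A\times TI \to \mathscr{R}_B$ is. The ruth $\mathscr{R}_A\times TI$ lives on the Lie algebroid $A\times TI \Rightarrow M\times I$, with coefficients in $\pr^*\mathcal{E}$ and structure operator $\mathrm{D}_{\mathscr{R}_A}$ tensored with the de Rham differential of $I$. Forms on it with values in the dual decompose as
\begin{align*}
 \Omega(A\times TI;\pr^*\mathcal{E}^*) \ni \omega = \alpha_t + \dd t\wedge \beta_t ,
\end{align*}
where $\alpha_t,\beta_t$ are smooth $t$-families in $\Omega(A;\mathcal{E}^*)$. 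In this decomposition the differential reads
\begin{align*}
 \mathrm{D}\omega = \mathrm{D}_{\mathscr{R}_A^*}\alpha_t + \dd t\wedge\big(\partial_t\alpha_t - \mathrm{D}_{\mathscr{R}_A^*}\beta_t\big).
\end{align*}
Here I am using the sign convention $\dd t$ placed first. Checking this formula is routine: it holds because the product ruth is just the tensor product of DGA modules $\Omega(I)\otimes\Omega(A;\mathcal{E}^*)$, suitably completed.

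The endpoints $\Phi_s$, for $s=0,1$, are defined as $i_s^*\circ\Phi$, where $i_s : A \to A\times TI$ is the inclusion at time $s$. Restriction $i_s^*$ is itself a morphism of DGA modules over the Lie algebroid map $i_s$, so $\Phi_s$ is a morphism of ruths, and in particular a cochain map. Next I define the homotopy operator
\begin{align*}
 h(\omega) := \int_0^1 \beta_t\,\dd t .
\end{align*}
Integrating the $\dd t$ component of the formula above gives the Cartan-type identity
\begin{align*}
 i_1^*-i_0^* = \mathrm{D}_{\mathscr{R}_A^*}\, h + h\,\mathrm{D}
\end{align*}
on $\Omega(A\times TI;\pr^*\mathcal{E}^*)$, up to a global sign I will fix carefully. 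This is just the fundamental theorem of calculus combined with the fact that $\mathrm{D}_{\mathscr{R}_A^*}$ commutes with $\int\dd t$. The latter holds because $\mathrm{D}_{\mathscr{R}_A^*}$ is a $t$-independent operator: it is a first-order differential operator along $M$ only.

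Finally, compose with $\Phi$. Set $H = h\circ\Phi : \Omega(B;\mathcal{F}^*)\to\Omega(A;\mathcal{E}^*)$. Since $\Phi$ is a cochain map, $\Phi\,\mathrm{D}_{\mathscr{R}_B^*} = \mathrm{D}\,\Phi$, and therefore
\begin{align*}
 \Phi_1-\Phi_0 = \mathrm{D}_{\mathscr{R}_A^*} H + H\,\mathrm{D}_{\mathscr{R}_B^*}.
\end{align*}
So $\Phi_0^*=\Phi_1^*$ in cohomology. Notably, the module structure of $\Phi$ is never used; only the cochain property is.

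I expect the main obstacle to be the first step: verifying precisely that the differential of the product ruth $\mathscr{R}_A\times TI$ on dual coefficients splits as claimed. In particular, the higher components $\mathrm{D}_k$ ($k\ge 2$) of the ruth must not pick up $\dd t$ terms, and the dual-ruth signs, together with the grading of $\mathcal{E}^*$, must be tracked so that $h$ has the correct degree and sign. I will handle this by working entirely at the level of the DGA module $\Omega(A;\mathcal{E}^*)$ and the tensor product with $\Omega(I)$, never choosing connections. Then all signs are the Koszul signs of the tensor product, and the identity reduces to the one for $\Omega(I)$.
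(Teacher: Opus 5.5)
Your proposal is correct and is essentially the paper's proof. Lemma~\ref{lem : stokes} establishes exactly your identity $i_1^*-i_0^*=\mathrm{D}_{\mathscr{R}^*}h+h\,\mathrm{D}_{\mathscr{R}^*\times TI}$ for the fibre integral $h$ (contraction with $\partial_t$ followed by $\int_0^1\dd t$). The theorem then follows, as in your last step, by composing with the cochain map $\Phi$ to get the homotopy operator $\mathfrak{H}_{\Phi}=h\circ\Phi$, and, as you note, only the cochain property of $\Phi$ is used.

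The only difference is cosmetic. The paper checks the identity on generators $\omega\otimes\pr^*(\zeta)$ using the Leibniz rule and the scalar Stokes formula. You instead split $\omega=\alpha_t+\dd t\wedge\beta_t$ and apply the fundamental theorem of calculus. With $\dd t$ placed first, your sign already comes out right and needs no correction.
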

\end{mdframed}
\vspace*{0.2cm}

As with usual representations, a morphism of Lie algebroids $\phi : A \to B$ induces a ruth $\phi^{\star}(\mathscr{R})$ on $A$ from a ruth $\mathscr{R}$ on $B$. Our main result is that the pullbacks of a ruth along homotopic Lie algebroid morphisms are \emph{isomorphic}; more precisely:
\vspace*{0.2cm}
\begin{mdframed}
 \begin{theoremb}
 Let $\phi : A \times TI \to B$ be a Lie algebroid homotopy, and let $\mathscr{R}$ be a ruth on $B$. Then $\phi^{\star}(\mathscr{R})$ and $\phi_0^{\star}(\mathscr{R}) \times TI$ are isomorphic ruths on $A \times TI$.
\end{theoremb}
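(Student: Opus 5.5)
The plan is to imitate the classical argument for honest representations, where the isomorphism $\phi^*(E)\cong\phi_0^*(E)\times TI$ is obtained by parallel transport along the $I$-direction. On $A\times TI$, the ruth $\phi^{\star}(\mathscr{R})$ has total operator $\mathrm{D}=\mathrm{D}_{\phi^\star\mathscr{R}}$ acting on $\Omega(A\times TI;\phi^*\mathcal{F})$. The key structural fact I will use is that
\begin{align*}
\Omega(A\times TI)\cong \Omega(A)\otimes\Omega(I)
\end{align*}
as bigraded algebras. Accordingly, $\mathrm{D}$ decomposes as
\begin{align*}
\mathrm{D}=\mathrm{D}_t+\dd t\wedge\nabla_{\partial_t},
\end{align*}
where $\mathrm{D}_t$ is a smooth family of ruths $\phi_t^\star(\mathscr{R})$ on $A$. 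The operator $\nabla_{\partial_t}$ is the contraction with the lift of $\partial_t$. Because $\mathrm{D}$ is a ruth, it may carry components of nonzero degree in $\mathcal{F}$. So $\nabla_{\partial_t}=\sum_k \omega_k$, with $\omega_k\in\Omega^k(A;\mathrm{End}^{1-k}(\mathcal{F}))$ depending on $t$, after reordering signs.

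The first step is to extract from $\mathrm{D}^2=0$ the component containing $\dd t$. I expect it to read
\begin{align*}
\frac{\dd}{\dd t}\mathrm{D}_t=[\mathrm{D}_t,\Theta_t],
\end{align*}
for an operator $\Theta_t$ of total degree $0$ on $\Omega(A;\phi_t^*\mathcal{F})$ built from $\nabla_{\partial_t}$. This $\Theta_t$ is $\Omega(A)$-linear up to the derivative term: its form-degree-zero part is a connection in the $t$-direction on the pullback bundle, and its higher parts are $\Omega(A)$-linear. This is the ruth analogue of the classical identity "$\partial_t$ of the pulled-back connection is the derivative of the transport".

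The second step is to solve the time-dependent linear ODE
\begin{align*}
\frac{\dd}{\dd t}\Psi_t=\Theta_t\circ\Psi_t,\qquad \Psi_0=\id.
\end{align*}
I will solve it componentwise in form degree. The degree-zero part is ordinary parallel transport along $t\mapsto\phi_t$ in the graded bundle $\mathcal{F}$, which exists on compact $I$. The higher components satisfy inhomogeneous linear ODEs and are solved successively by variation of constants. Because the form degree is bounded by $\mathrm{rk}A$, this recursion terminates, so no convergence issue arises; this nilpotence is what replaces a semidirect-product argument. The resulting $\Psi_t$ is $\Omega(A)$-linear, of total degree $0$, and invertible, since its degree-zero part is an isomorphism. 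The uniqueness argument for this ODE then gives
\begin{align*}
\Psi_t^{-1}\mathrm{D}_t\Psi_t=\mathrm{D}_0,
\end{align*}
because both sides satisfy the same linear ODE in $t$ with the same initial value.

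Finally I assemble $\Psi=\{\Psi_t\}$ into an isomorphism of ruths over $A\times TI$,
\begin{align*}
\phi_0^\star(\mathscr{R})\times TI\to\phi^\star(\mathscr{R}).
\end{align*}
Here $\phi_0^\star(\mathscr{R})\times TI$ has operator $\mathrm{D}_0+\dd t\,\partial_t$. One checks that $\Psi\circ(\mathrm{D}_0+\dd t\,\partial_t)=\mathrm{D}\circ\Psi$: the $\dd t$-free part is the conjugation identity, and the $\dd t$ part is the ODE. Dualizing, or applying the same construction to $\mathcal{F}^*$, gives the isomorphism in the sense of the paper's morphisms, covering the identity of $A\times TI$.

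The main obstacle I expect is the first step. This means getting the precise form of the $\dd t$-component of $\mathrm{D}^2=0$, with correct Koszul signs, and verifying that $\Theta_t$ is a derivation-type operator of the right kind, so that the flow $\Psi_t$ is $\Omega(A)$-linear. I would first try to handle this by contracting with the vector field $\partial_t$ on $A\times TI$, via the Cartan calculus $\mathcal{L}=[\iota,\mathrm{D}]$, and defining $\Theta_t:=[\iota_{\partial_t},\mathrm{D}]$ restricted to $t$-slices.
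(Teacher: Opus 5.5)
Your plan is essentially the paper's proof: your $\Theta_t=[\iota_{\partial_t},\mathrm{D}]$ on $t$-slices is the paper's degree-zero derivation $\mathrm{L}=[i_{\partial_\epsilon},\mathrm{D}]$, whose flow (parallel transport along $\partial_\epsilon$) the paper shows, by ODE uniqueness, to be $\Omega(A)$-linear and to intertwine the slice differential with $\mathrm{D}_{\phi_0^{\star}(\mathscr{R}_B^*)}$; it then assembles the flow as $\widetilde{\Phi}(\eta)=\Phi(\eta)+\mathrm{d}\epsilon\wedge\Phi(i_{\partial_\epsilon}\eta)$, and works directly with dual coefficients instead of dualizing at the end. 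One sign needs fixing: $[\mathrm{D},\mathrm{L}]=0$ gives $\tfrac{d}{dt}\mathrm{D}_t=[\mathrm{D}_t,\Theta_t]$, with $\Theta_t$ the $\Omega(A)$-linear part of $\mathrm{L}$ in a trivialization along $I$, and then the transport must be $\tfrac{d}{dt}\Psi_t=-\Theta_t\Psi_t$ (so that $\Psi_t$ produces $\mathrm{L}$-parallel sections), because with your sign $\tfrac{d}{dt}(\Psi_t^{-1}\mathrm{D}_t\Psi_t)=2\,\Psi_t^{-1}[\mathrm{D}_t,\Theta_t]\Psi_t$, which need not vanish.
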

\end{mdframed}
\vspace*{0.2cm}

In fact, there is a \emph{fibered} picture underlying these constructions. Call a \emph{submersion by Lie algebroids} the data of a Lie algebroid $B \Rightarrow N$, and a surjective submersion $p$ whose fibres are transverse to $B$. A choice of ruth $\mathscr{R}$ on $B$ is a \emph{submersion by ruths}. Any smooth map $f:M \to \underline{N}$ induces a pullback \emph{submersion by ruths}
\begin{align*}
 && f^{\bullet}(\mathscr{R}) = (f^{\bullet}(B) \curvearrowright f^*(N) \times_N \mathcal{E}),
 && f^{\bullet}(B) \Rightarrow f^*(N),
 && p: f^*(N) \to M
\end{align*}
and a morphism of submersions by ruths $f_{\bullet} : f^{\bullet}(\mathscr{R}) \to \mathscr{R}$. We think of submersions by ruths as a smooth family of ruths
\begin{align*}
  i_x^{\bullet}(\mathscr{R}) \ \ \text{on} \ \ i_x^{\bullet}(B) \Rightarrow p^{-1}(x),
\end{align*}
parametrized by $x \in \underline{N}$ and assembled in a ``Lie way''. A submersion by ruths is locally trivial if every $x \in \underline{N}$ has an open neighborhood $U \subset \underline{N}$, for which
\begin{align*}
 && i_U^{\bullet}(\mathscr{R})
 && \text{and}
 && i_x^{\bullet}(\mathscr{R}) \times TU
\end{align*}
are isomorphic submersions by ruths over $\mathrm{id}_U$.

\vspace*{0.2cm}
\begin{mdframed}
 \begin{theoremc}
A submersion by ruths is locally trivial exactly when its underlying submersion by Lie algebroids is locally trivial.
\end{theoremc}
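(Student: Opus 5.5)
One direction is immediate, and the converse will be reduced to Theorem B. Suppose the submersion by ruths is locally trivial, so that $i_U^{\bullet}(\mathscr{R})\cong i_x^{\bullet}(\mathscr{R})\times TU$ over $\mathrm{id}_U$. Such an isomorphism is a module isomorphism lying over a Lie algebroid isomorphism $i_U^{\bullet}(B)\cong i_x^{\bullet}(B)\times TU$ over $\mathrm{id}_U$. Forgetting the coefficients then exhibits the underlying submersion by Lie algebroids as locally trivial. So the content of Theorem C is the converse, and the plan is to deduce it from Theorem B applied on a chart, in the same way the classical local triviality of smooth families of representations is proved.

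Assume the underlying submersion by Lie algebroids is locally trivial. Fix $x\in\underline{N}$, and choose a neighbourhood $U$ together with an isomorphism
\begin{align*}
 \psi : i_x^{\bullet}(B)\times TU \diffto i_U^{\bullet}(B)
\end{align*}
over $\mathrm{id}_U$. Shrink $U$ to a coordinate ball centred at $x$, identified with a star-shaped open set in $\mathbb{R}^k$. Transport $i_U^{\bullet}(\mathscr{R})$ along $\psi$. This reduces the problem to the following: for a Lie algebroid $A:=i_x^{\bullet}(B)$ and a ruth $\mathscr{S}$ on $A\times TU$, show that $\mathscr{S}\cong i_x^{\star}\mathscr{S}\times TU$, where $i_x^{\star}\mathscr{S}$ denotes restriction to the fibre over $x$.

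For the reduced problem, use the radial contraction $c:U\times I\to U$, $c(u,t)=tu$. Its tangent map gives a Lie algebroid homotopy
\begin{align*}
 \phi=\mathrm{id}_A\times Tc \ :\ (A\times TU)\times TI\rmap A\times TU,
\end{align*}
with $\phi_1=\mathrm{id}$ and $\phi_0$ the composite of the projection to $A\times T\{x\}=A$ with the inclusion. Theorem B, applied to $\phi$ and $\mathscr{S}$, gives an isomorphism $\phi^{\star}\mathscr{S}\cong\phi_0^{\star}\mathscr{S}\times TI$ of ruths on $(A\times TU)\times TI$. Restricting this isomorphism to $t=1$ by pulling back along the inclusion $A\times TU\times\{1\}$ yields
\begin{align*}
 \mathscr{S}=\phi_1^{\star}\mathscr{S}\cong\phi_0^{\star}\mathscr{S}=(i_x^{\star}\mathscr{S})\times TU,
\end{align*}
and this isomorphism covers $\mathrm{id}_U$.

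The main obstacle is the bookkeeping needed to check that these pullbacks compose functorially. One needs $\phi_0^{\star}\mathscr{S}$ to coincide with the product ruth $(i_x^{\star}\mathscr{S})\times TU$, and one needs restricting an isomorphism of ruths to the slice $t=1$ to give an isomorphism. Both should follow from the natural identification $(\psi\circ\chi)^{\star}=\chi^{\star}\psi^{\star}$ at the level of the DGA modules $\Omega(-;\mathcal{E}^*)$; with the dual convention for morphisms this holds strictly. It also remains to check that the resulting isomorphism is one of submersions by ruths over $\mathrm{id}_U$. This is the case because every map involved commutes with the projections to $U$ at $t=1$, and because Theorem B's isomorphism lies over the identity of $A\times TU\times TI$. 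Composing with $\psi$ then gives the required trivialization $i_U^{\bullet}(\mathscr{R})\cong i_x^{\bullet}(\mathscr{R})\times TU$.
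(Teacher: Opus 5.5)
Your argument is correct. Its core is the same as the paper's: apply Theorem B to a homotopy contracting a neighbourhood $U$ onto $x$, then restrict to the end $t=1$. The routes differ in where the Lie-algebroid trivialization comes from.

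The paper never uses a given trivialization $\psi$. It assumes the submersion carries a complete Ehresmann connection and applies Proposition~\ref{pro : local triviality of fibration by ruths under lie homotopy} to $Th$, for a contraction $h$ of $U$. Inside that proposition, the flow of the horizontal lift of $\tfrac{\partial}{\partial\epsilon}$ produces the isomorphism $\phi_0^*(B)\times TI\cong\phi^*(B)$ of pulled-back Lie algebroids along the homotopy. Only then is Theorem B invoked.

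You instead take the trivialization $i_x^{\bullet}(B)\times TU\cong i_U^{\bullet}(B)$ as the hypothesis, transport the ruth to the product, and apply Theorem B to the product homotopy $\mathrm{id}_A\times Tc$. This needs no extensions, fibred products or Ehresmann connections. It proves exactly the stated equivalence, and you also record the easy direction, which the paper leaves implicit.

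The paper's route buys a more general fibred statement: local triviality of fibrations by ruths under an arbitrary Lie homotopy $A\times TI\to\underline{B}$. It also ties local triviality to the checkable criterion of having a complete Ehresmann connection.

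One bookkeeping point to add to your version. The restriction of $\psi$ to the fibre over $x$ is an automorphism $\psi_x$ of $A$, not necessarily the identity, so $i_x^{\star}\mathscr{S}=\psi_x^{\star}(i_x^{\bullet}\mathscr{R})$. You can either normalize $\psi$ by precomposing it with $\psi_x^{-1}\times\mathrm{id}_{TU}$, or note that $(\psi_x)_\star$ gives an isomorphism over $\psi_x\times\mathrm{id}_{TU}$, which still lies over $\mathrm{id}_U$.
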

\end{mdframed}
\vspace*{0.2cm}

The paper is organized as follows:
\begin{itemize}
	\item \underline{Section \ref{sec : Lie algebroids}} sets up the notation concerning Lie algebroids and their morphisms, emphasizing the differential graded algebra (DGA) point of view. 
	
	\item Coefficients for representations of Lie algebroids appear in \underline{Section \ref{sec : Representations}} in the DGA perspective, and we highlight the covariance issues in the definition of morphism between representations of Lie algebroids over different bases, which plays a crucial role later.
	
	\item  The key notion of homotopy of Lie algebroid morphisms is introduced in \underline{Section \ref{sec : Homotopies}}, where we recall that fibre integration gives a canonical model for a homotopy operator. 
	
	\item The most important instance of a homotopy for this note is the one given by the (local) flow of a section of a Lie algebroid, which we discuss in \underline{Section \ref{sec : Flows in Lie algebroids}}. 
	
	\item The main object of this note, representations up to homotopy (``ruth'' for short), are introduced in \underline{Section \ref{sec : Representations up to homotopy}}, where we propose a general definition of morphism and discuss various notions of pullback ruth.
	
	\item Theorem A is proved in \underline{Section \ref{sec : Homotopy invariance of cohomology}}, which turns out to be straightforward in our set-up. 
	
	\item Theorem B is more delicate, and is explained in \underline{Section \ref{sec : Local triviality}}, where we also address the version of homotopy invariance for the pullback of a ruth under a homotopy. Finally, we highlight in \underline{Section \ref{sec : The fibred picture}} that the discussion in the paper has a relative version, where ruths are defined on the total space of a \emph{fibrations by Lie algebroids} --- that is, extensions of Lie algebroids satisfying a certain completeness condition. Those include submersions by Lie algebroids as a basic example.
\end{itemize} 

\subsection*{Acknowledgements} The authors acknowledge financial support from CNPq Universal grant 402320/2023-9.

\section{Lie algebroids}\label{sec : Lie algebroids}

A \emph{Lie algebroid} $A \Rightarrow M$ is a vector bundle $A$ on $M$, equipped with a bundle map $\rho_A : A \to TM$ covering the identity, and a Lie bracket $[\cdot,\cdot]_A : \Gamma(A) \times \Gamma(A) \to \Gamma(A)$, such that
\begin{align*}
 & [a,fb]_A = f[a,b]_A+(\mathscr{L}_{\rho_A(a)f})b, & f \in C^{\infty}(M), \ a,b \in \Gamma(A).
\end{align*}
The whole structure of $A$ as a Lie algebroid can be dually defined as a linear map

\begin{center}
\begin{minipage}{0.5\textwidth}
 \begin{mdframed}[backgroundcolor=green!5]
 \vspace{-0.2cm}
\begin{align*}
 \dd_A : \Omega(A) \to \Omega(A), 
\end{align*}  
 \end{mdframed}
\end{minipage}
\end{center}
its \emph{de Rham differential}, which:

\vspace{0.2cm}
\begin{minipage}{0.9\textwidth}
 \begin{mdframed}[backgroundcolor=olive!3]
\begin{enumerate}[align-left]
 \item[LAlg1)] has degree one, $\dd_A : \Omega^p(A) \to \Omega^{p+1}(A)$;
 \item[LAlg2)] is a derivation of $\wedge$:
 \begin{align*}
\dd_A(\omega \wedge \eta) = (\dd_A\omega)\wedge \eta+(-1)^{p}\omega \wedge (\dd_A\eta),
\end{align*}
 for all $(\omega,\eta) \in \Omega^p(A) \times \Omega^q(A)$;
 \item[LAlg3)] squares to zero: $\mathrm{d}_{A}^2=0$.
\end{enumerate}  
 \end{mdframed}
\end{minipage}
\vspace{0.2cm}

Said otherwise, $\Omega(A)$ has the structure of a \emph{differential graded algebra}.

If $B \Rightarrow N$ is another Lie algebroid, and
\begin{align*}
 \xymatrix{
 A \ar@{=>}[d] \ar[r]^{\phi} & B \ar@{=>}[d] \\
 M \ar[r]_{\underline{\phi}} & N
 }
\end{align*}
is a vector bundle map, then there is an induced algebra homomorphism
  \begin{align*}
   & \phi^* : \Omega(B) \to \Omega(A), & (\phi^*\omega)_x(a_1,...,a_p):=\omega_{\underline{\phi}(x)}(\phi(a_1),...,\phi(a_p)),
  \end{align*}
and we call $\phi : A \to B$ a \emph{\bf morphism} of Lie algebroids if $\phi^*$ is a cochain map:
\begin{align*}
\dd_A \phi^* = \phi^* \dd_B.
\end{align*}
Lie algebroid homomorphisms $A \to B$ are thus in bijective correspondence with linear maps

\begin{center}
\begin{minipage}{0.5\textwidth}
 \begin{mdframed}[backgroundcolor=magenta!5]
 \vspace{-0.2cm}
\begin{align*}
 \Phi : \Omega(B) \to \Omega(A)
\end{align*}  
 \end{mdframed}
\end{minipage}
\end{center}
which are homomorphisms of differential graded algebras --- that is, which

\vspace{0.2cm}
\begin{minipage}{0.9\textwidth}
 \begin{mdframed}[backgroundcolor=olive!3]
\begin{enumerate}[align-left]
 \item[MLAlg1)] have degree zero;
 \item[MLAlg2)] commute with $\wedge$:
 \begin{align*}
  &\Phi(\omega \wedge \eta) = \Phi(\omega) \wedge \Phi(\eta), & \omega,\eta \in \Omega(B);
 \end{align*}
 \item[MLAlg3)] are a cochain maps:
 \begin{align*}
  &\Phi \mathrm{d}_{B} = \mathrm{d}_{A} \Phi.
 \end{align*}
\end{enumerate} 
 \end{mdframed}
\end{minipage}
\vspace{0.2cm}

 \begin{example}[Pullback of a Lie algebroid by a transverse map]\label{ex : Pullback of a Lie algebroid by a transverse map}\normalfont
 Let $B \Rightarrow N$ be a Lie algebroid, and let $f:M \to N$ be a smooth map which is \emph{transverse} to $B$, in the sense that
 \begin{align}\label{eq : transverse map}
  Tf(T_xM) + \rho_B(B_{f(x)}) = T_{f(x)}N
 \end{align}
 for all $x \in M$. Then the \emph{pullback Lie algebroid} $f^!(B) \Rightarrow M$,
 \begin{align*}
  f^!(B):= TM \times_{TN}B = \{(v,b) \in TM \times B \ | \ Tf(v)=\rho_B(b)\}
 \end{align*}
 is defined, and comes equipped with a \emph{pullback morphism of Lie algebroids} $f_!(v,b) = b$:
 \begin{align*}
  \xymatrix{
   f^!(B) \ar[r] \ar@{=>}[d]
 & B \ar@{=>}[d] \\
 M \ar[r]_f 
 & N
  }
 \end{align*}
\end{example}

\section{Lie algebroid representations}\label{sec : Representations}
A \emph{representation} of $A \Rightarrow M$ on a vector bundle $E$ on $M$ is given by an \emph{$A$-connection}, that is, a linear map
\begin{align*}
 & \nabla : \Gamma(E) \to \Omega(A;E), & e \mapsto \nabla e,
\end{align*}
into the space $\Omega(A;E):=\Omega(A) \otimes_{C^{\infty}(M)}\Gamma(E)$ of forms on $A$ with values in $E$, such that
\begin{align*}
 \nabla (fe) = f\nabla e + \mathrm{d}_Af \otimes e
\end{align*}
for all $(f,e) \in C^{\infty}(M) \times \Gamma(E)$, which is \emph{flat}:
\begin{align*}
 & \nabla_{[a,b]_A} = \left[\nabla_a,\nabla_b\right], & a,b \in \Gamma(A).
\end{align*}
There is then a bijective correspondence\footnote{Where $\mathrm{d}_{\nabla}(\omega)(a_0,....,a_p)$, for $\omega \in \Omega^p(A;E)$ and $a_0,...,a_p \in \Gamma(A)$, is given explicitly by
\begin{align*}
  \sum_{i=0}^p (-1)^i\nabla_{a_i}\omega(a_0,...,a_{i-i},a_{i+1},...,a_p) + \sum_{i<j}(-1)^{i+j}\omega([a_i,a_j],a_0,...,a_{i-i},a_{i+1},...,a_{j-i},a_{j+1},...,a_p).
\end{align*}
} between representations $\nabla : A \curvearrowright E_A$ of $A$ on $E$, and linear maps

\begin{center}
\begin{minipage}{0.5\textwidth}
 \begin{mdframed}[backgroundcolor=green!5]
 \vspace{-0.2cm}
\begin{align*}
 \dd_{\nabla} : \Omega(A;E) \to \Omega(A;E), 
\end{align*}  
 \end{mdframed}
\end{minipage}
\end{center}
which:

\vspace{0.2cm}
\begin{minipage}{0.9\textwidth}
 \begin{mdframed}[backgroundcolor=olive!3]
\begin{enumerate}[align-left]
 \item[Rep1)] have degree one;
 \item[Rep2)] are derivations of $\wedge$:
 \begin{align*}
\dd_{\nabla}(\omega \wedge \eta) = (\dd_A\omega)\wedge \eta+(-1)^{p}\omega \wedge (\dd_{\nabla}\eta),
\end{align*}
 for all $(\omega,\eta) \in \Omega^p(A) \times \Omega^q(A)$;
 \item[Rep3)] square to zero: $\mathrm{d}_{\nabla}^2=0$.
\end{enumerate}
 \end{mdframed}
\end{minipage}
\vspace{0.2cm}

\noindent $\Omega(A;E)$ is thus a ``module'' over the differential graded algebra $\Omega(A)$.

\begin{example}\normalfont
 The \emph{dual} of a representation $\nabla : A \curvearrowright E$ is the representation $\nabla^* : A \curvearrowright E^*$ determined by the condition that
 \begin{align*}
  \mathscr{L}_{\rho_A(a)} \langle \zeta, e\rangle = \langle \nabla^*_a\zeta, e\rangle + \langle \zeta,\nabla_ae\rangle
 \end{align*}
 for all $(a,\zeta,e) \in \Gamma(A) \times \Gamma(E^*)\times \Gamma(E)$. Clearly, $(\nabla^*)^*=\nabla$.
\end{example}

\begin{example}[Semi-direct product]\normalfont
 Given a representation $\nabla : A \curvearrowright E$, there is a Lie algebroid structure on $\mathbf{R}_A = A \oplus E$, with anchor and bracket given by
 \begin{align*}
  & \rho_{\mathbf{R}_A} = \rho_A \circ \mathrm{pr}_A,
  & \left[ \begin{pmatrix}
            a_1 \\ e_1
           \end{pmatrix},
           \begin{pmatrix}
            a_2 \\ e_2
           \end{pmatrix}
 \right] = \begin{pmatrix}
            [a_1,a_2]_A \\ \nabla_{a_1}e_2 - \nabla_{a_2}e_1
           \end{pmatrix}.
 \end{align*}
Note that $\mathrm{pr}_A:\mathbf{R}_A \to A$ is a surjective Lie algebroid map over $\mathrm{id}_M$ which admits a splitting $\sigma : A \to \mathbf{R}_A$ which is a Lie algebroid map. 
\end{example}

	Fix representations $\nabla^A : A \curvearrowright E_A$ and $\nabla^B : B \curvearrowright E_B$ of $A \Rightarrow M$ and $B \Rightarrow N$ respectively, together with a Lie algebroid map and a vector bundle map as follows
	\begin{align*}
		\begin{gathered}
			\xymatrix{
				A \ar@{=>}[d] \ar[r]^{\phi} & B \ar@{=>}[d] \\
				M \ar[r]_{\underline{\phi}} & N,
			}
		\end{gathered}&&
		\begin{gathered}
			\xymatrix{
				E_A \ar[d] \ar[r]^{\psi} & E_B \ar[d] \\
				M \ar[r]_{\underline{\phi}} & N.
		}\end{gathered}
	\end{align*}	
Together, $\phi$ and $\psi$ define a map
$\Phi : \Omega(B,E_B^*) \to \Omega(A,E_A^*)$ completely determined by:
\begin{align*}
	& \langle \Phi(\eta)_x(a_1,...,a_p),e \rangle:= \langle \eta_{\underline{\phi}(x)}(\phi(a_1),...,\phi(a_p)),\psi(e) \rangle
\end{align*}
for all $x \in M$, $a_1,...,a_p \in A_x$ and $e \in E_{A,x}$. The following are then equivalent:
\begin{enumerate}[i)]
	\item $\Phi : \Omega(B,E_B^*) \to \Omega(A,E_A^*)$ is a cochain map;
	\item $(\phi,\psi):\mathbf{R}_A \to \mathbf{R}_B$ is a Lie algebroid map,
\end{enumerate}
\noindent in which case we call the pair $(\phi,\psi)$ a \emph{morphism of representations}. Algebraically, such morphisms translate into linear maps
\begin{center}
\begin{minipage}{0.5\textwidth}
 \begin{mdframed}[backgroundcolor=magenta!5]
 \vspace{-0.2cm}
\begin{align*}
 \Phi : \Omega(B;E_B^*) \to \Omega(A;E_A^*)
\end{align*}  
 \end{mdframed}
\end{minipage}
\end{center}
which are homomorphisms of modules over differential graded algebras, meaning that:

\vspace{0.2cm}
\begin{minipage}{0.9\textwidth}
 \begin{mdframed}[backgroundcolor=olive!3]
\begin{enumerate}[align-left]
 \item[MRep1)] $\Phi$ has degree zero;
 \item[MRep2)] $\Phi$ is a module homomorphism over $\phi^*$:
 \begin{align*}
  &\Phi(\omega \wedge \eta) = \phi^*(\omega) \wedge \Phi(\eta), & (\omega,\eta) \in \Omega(B) \times \Omega(B;E_B^*);
 \end{align*}
 \item[MRep3)] $\Phi$ is a cochain map:
 \begin{align*}
  &\Phi \mathrm{d}_{{\nabla^B}^*} = \mathrm{d}_{{\nabla^A}^*} \Phi.
 \end{align*}
\end{enumerate}
 \end{mdframed}
\end{minipage}
\vspace{0.2cm}

\begin{remark}\label{rem : dual coefficients}\normalfont
Taking cohomology of representations of Lie algebroids only defines a \emph{functor} if said cohomology is taken with coefficients in the \emph{dual} representation. That the dualization of coefficients is required in order that a morphism of representations give rise to a morphism in cohomology was first observed in \cite[Section 1.2]{Schwarzbach_Gengoux_Weinstein}. Without dualizing coefficients, a morphism $(\phi,\psi):\mathbf{R}_A \to \mathbf{R}_B$ of representations of Lie algebroids only induces a \emph{relation}
\begin{align*}
 \mathrm{Rel} \subset \Omega(A,E_A) \times \Omega(B,E_B)
\end{align*}
as follows: factor $\psi : E_A \to E_B$ as the composition of
\begin{align*}
 & \overline{\psi} : E_A \to \underline{\phi}^*(E_B),
 & \phi : \underline{\phi}^*(E_B) \to E_B.
\end{align*}
Then $(\eta_A,\eta_B) \in \Omega(A,E_A) \times \Omega(B,E_B)$ lies in $\mathrm{Rel}$ iff
\begin{align*}
 \overline{\psi}(\eta_A) = \phi^*(\eta_B) \in \Omega(A,\underline{\phi}^*(E_B)).
\end{align*}
There are two special instances in which a morphism of representations $(\phi,\psi):\mathbf{R}_A \to \mathbf{R}_B$ does induce a cochain map $\Omega(B,E_B) \to \Omega(A,E_A)$, namely:
\begin{itemize}
 \item when the vector bundle map $\psi : E_A \to E_B$ is fibrewise invertible, or
 \item when $A$ and $B$ are Lie algebroids on the same manifold $M$, and $\underline{\phi}=\mathrm{id}_M$.
\end{itemize}
\end{remark}

\section{Homotopies}\label{sec : Homotopies}

Let $I=[0,1]$ stand for the unit interval, and $TI \Rightarrow I$ for its tangent bundle. 
A {\bf homotopy} between Lie algebroids $A \Rightarrow M$ and $B \Rightarrow N$ is a Lie algebroid map $\phi : A \times TI \to B$. Two Lie algebroid maps	$\phi_0 : A \to B$ and  $\phi_1 : A \to B$ are {\bf homotopic} if a homotopy $\phi : A \times TI \to B$ exists, with $\phi_0 = \phi \circ i_0$ and $\phi_1 = \phi \circ i_1$, where
\begin{align}\label{eq : endpoints of A times TI}
	& i_0 : A \to A \times TI, & i_1 : A \to A \times TI
\end{align}
denote the \emph{endpoints} of $A \times TI$, that is, the canonical identifications of $A$ with $A \times T\{0\}$ and $A \times T\{1\}$:
\begin{align*}
	\xymatrix{
		& B & \\
		A \ar[ur]^{\phi_0} \ar[r]_{i_0 \phantom{123}} & A \times TI \ar[u]_{\phi} & A \ar[l]^{\phantom{123}i_1} \ar[ul]_{\phi_1}
	}
\end{align*}

\vspace{0.2cm}
\begin{minipage}{0.9\textwidth}
 \begin{mdframed}[backgroundcolor=blue!5]
\begin{proposition}\label{pro : homotopy op lie alg}
 A Lie algebroid homotopy $\phi : A \times TI \to B$ determines a \emph{canonical} cochain homotopy
 \begin{align*}
  \mathfrak{h}_{\phi} : \Omega^{\bullet}(B) \to \Omega^{\bullet-1}(A)
 \end{align*}
 between the cochain maps $\phi_1^*,\phi_0^*:\Omega^{\bullet}(B) \to \Omega^{\bullet}(A)$ induced by the endpoints of $\phi$:
 \begin{align*}
  \phi_1^*(\omega)-\phi_0^*(\omega) = \dd_A \mathfrak{h}_{\phi}(\omega)+\mathfrak{h}_{\phi}\left(\dd_B\omega \right).
 \end{align*}
\end{proposition}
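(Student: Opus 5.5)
The homotopy operator will be $\mathfrak{h}_\phi := \int_I \circ \phi^*$: pull a form on $B$ back to $A \times TI$ and integrate over the fibre $I$. Since $\phi$ is a Lie algebroid map, $\phi^* : \Omega(B) \to \Omega(A \times TI)$ is a cochain map, and the endpoints satisfy $\phi_j^* = i_j^*\phi^*$. It therefore suffices to prove the universal statement: fibre integration
\begin{align*}
 \int_I : \Omega^{\bullet}(A\times TI) \to \Omega^{\bullet-1}(A)
\end{align*}
is a cochain homotopy between $i_1^*$ and $i_0^*$, that is,
\begin{align*}
 i_1^* - i_0^* = \dd_A \int_I + \int_I \dd_{A\times TI}.
\end{align*}
Composing with $\phi^*$ and using $\dd_{A\times TI}\phi^* = \phi^*\dd_B$ then gives the Proposition. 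Canonicity holds because no choices enter beyond $\phi$ itself.

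For the universal statement, first describe $\Omega(A\times TI)$. The product algebroid $A\times TI \Rightarrow M\times I$ has forms generated over $C^\infty(M\times I)$ by $\Omega(A)$ (pulled back along the projection) and by $\dd t$. Every form decomposes uniquely as
\begin{align*}
 \omega = \alpha_t + \dd t\wedge \beta_t,
\end{align*}
where $\alpha_t,\beta_t$ are smooth $t$-families of forms on $A$ with no $\dd t$ component. The differential splits as $\dd_{A\times TI} = \dd_A + \dd t\wedge \partial_t$, where $\dd_A$ acts fibrewise for each fixed $t$. This follows by checking the formula on generators (functions on $M\times I$ and $\Gamma(A^*)$, $\dd t$) and invoking the derivation property LAlg2. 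Since the anchor of $A\times TI$ is $\rho_A\times \mathrm{id}$ and the bracket is the product bracket, the verification on functions reduces to $\dd_{M\times I} f = \dd_A f + \dd t\,\partial_t f$ along $A$-directions.

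Next define $\int_I \omega := \int_0^1 \beta_t\,\dd t$, computed pointwise in $M$. The pullbacks are $i_j^*\omega = \alpha_j$. With $D = \dd_{A\times TI}$,
\begin{align*}
 D\omega = \dd_A\alpha_t + \dd t\wedge(\partial_t\alpha_t - \dd_A\beta_t),
\end{align*}
with the sign coming from moving $\dd t$ past $\dd_A$. Hence
\begin{align*}
 \int_I D\omega = \alpha_1-\alpha_0 - \int_0^1 \dd_A\beta_t\,\dd t.
\end{align*}
Since $\dd_A$ is a first-order operator in the $M$-variables, it commutes with $\int_0^1 \cdot\, \dd t$, so the last term equals $-\dd_A\int_I\omega$. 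This gives the homotopy formula.

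The main obstacle is bookkeeping: getting the decomposition of $\dd_{A\times TI}$ right, including the sign conventions for $\dd t\wedge$ and the commutation of $\dd_A$ with integration in $t$. If needed, I would fix signs by checking degree-zero and degree-one forms explicitly, for instance $\omega = f$ and $\omega = g\,\dd t$.
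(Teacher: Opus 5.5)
Your proposal is correct and follows the paper's proof. You set $\mathfrak{h}_\phi=\int_I\circ\phi^*$ and use $\mathrm{d}_{A\times TI}\phi^*=\phi^*\mathrm{d}_B$ and $\phi_j^*=i_j^*\phi^*$ to reduce everything to the fibre-integration Stokes formula $i_1^*-i_0^*=\mathrm{d}_A\int_I+\int_I\mathrm{d}_{A\times TI}$, exactly as the paper does. The only difference is that the paper cites this formula, whereas you prove it directly from the splitting $\omega=\alpha_t+\mathrm{d}t\wedge\beta_t$. Your $\int_0^1\beta_t\,\mathrm{d}t$ coincides with the paper's $\int_0^1\omega(\tfrac{\partial}{\partial t},\cdot)\,\mathrm{d}t$, and your computation is sound, including the sign $\mathrm{d}_{A\times TI}(\mathrm{d}t\wedge\beta_t)=-\mathrm{d}t\wedge\mathrm{d}_A\beta_t$.
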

\end{mdframed}
\end{minipage}
\begin{proof}
Indeed, consider the linear map of {\bf fibre integration} \cite{Balcerzak,CrainicFernandes}: 
\begin{align}\label{eq : fibre integral lie algebroid}
 & \smallfint : \Omega^{p+1}(A \times TI) \to \Omega^p(A), & \left(\smallfint \omega \right)(a_1,...,a_p):=\smallint_0^1 \omega(\tfrac{\partial}{\partial t},a_1,...,a_p) \dd t.
\end{align}
Then $\smallfint$ defines a chain homotopy between $i_0^*$ and $i_1^*$: for all $\omega \in \Omega(A \times TI)$,
\begin{align*}
 \dd_A (\smallfint \omega) + \smallfint ( \dd_{A \times TI}\omega) = i_1^*(\omega)-i_0^*(\omega),
\end{align*}
where $i_0,i_1 : A \to A \times TI$ denote the endpoints \eqref{eq : endpoints of A times TI}. One can then define an operator
\begin{align*}
 & \mathfrak{h}_{\phi} : \Omega^{\bullet}(B) \to \Omega^{\bullet-1}(A), & \mathfrak{h}_{\phi}(\omega):= \smallfint \phi^*(\omega),
\end{align*}
and check that:
\begin{align*}
 \dd_A \mathfrak{h}_{\phi}(\omega)+\mathfrak{h}_{\phi}(\dd_B \omega) & = \dd_A\smallfint \phi^*(\omega) +\smallfint \phi^*(\dd_B \omega)\\
 & = \dd_A\smallfint \phi^*(\omega) +\smallfint \dd_{A \times TI}\phi^*(\omega)\\
 & = i_1^*\phi^*(\omega) - i_0^*\phi^*(\omega)\\
 & = \phi_1^*(\omega) - \phi_0^*(\omega).
 \end{align*} 
\end{proof}

 \vspace{0.2cm}
\begin{minipage}{0.9\textwidth}
 \begin{mdframed}[backgroundcolor=blue!5]
 \begin{corollary}
  Homotopic Lie algebroid maps induce the same map in cohomology:
 \begin{align*}
  \phi_0^* = \phi_1^* : \mathrm{H}^{\bullet}(B) \to \mathrm{H}^{\bullet}(A).
 \end{align*}
 \end{corollary}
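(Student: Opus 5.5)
This is an immediate consequence of Proposition \ref{pro : homotopy op lie alg}, and the plan is simply to pass the chain homotopy identity to cohomology. Let $\phi : A \times TI \to B$ be a homotopy with endpoints $\phi_0 = \phi \circ i_0$ and $\phi_1 = \phi \circ i_1$. Each $\phi_j$ is a morphism of Lie algebroids, so $\phi_j^*$ is a cochain map by MLAlg3). Hence $\phi_j^*$ sends cocycles to cocycles and coboundaries to coboundaries, and therefore descends to a well-defined map $\phi_j^* : \mathrm{H}^{\bullet}(B) \to \mathrm{H}^{\bullet}(A)$.

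Now take a class $[\omega] \in \mathrm{H}^{p}(B)$ represented by a cocycle $\omega \in \Omega^p(B)$, so that $\dd_B \omega = 0$. Applying the identity of Proposition \ref{pro : homotopy op lie alg} with the canonical operator $\mathfrak{h}_{\phi} = \smallfint \circ \phi^*$ gives
\begin{align*}
 \phi_1^*(\omega) - \phi_0^*(\omega) = \dd_A \mathfrak{h}_{\phi}(\omega) + \mathfrak{h}_{\phi}(\dd_B \omega) = \dd_A \mathfrak{h}_{\phi}(\omega).
\end{align*}
So $\phi_1^*(\omega)$ and $\phi_0^*(\omega)$ differ by the coboundary of $\mathfrak{h}_{\phi}(\omega) \in \Omega^{p-1}(A)$. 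Their classes in $\mathrm{H}^p(A)$ therefore coincide, and since $[\omega]$ was arbitrary, $\phi_0^* = \phi_1^*$ on cohomology.

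There is no real obstacle here. All the substance sits in Proposition \ref{pro : homotopy op lie alg}, namely the fibre-integration homotopy formula $\dd_A \smallfint + \smallfint\, \dd_{A\times TI} = i_1^* - i_0^*$, which we are allowed to assume. The only thing to check is that the endpoints are genuinely Lie algebroid morphisms, so that they induce maps in cohomology at all. This holds because $i_0$ and $i_1$ are Lie algebroid morphisms (their pullbacks are restrictions to $t=0,1$, which commute with the differentials), and compositions of morphisms are morphisms.
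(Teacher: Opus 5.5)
Your proposal is correct and is exactly the argument the paper intends: the corollary is stated as an immediate consequence of Proposition \ref{pro : homotopy op lie alg}, with no separate proof, and applying $\phi_1^*(\omega)-\phi_0^*(\omega) = \dd_A \mathfrak{h}_{\phi}(\omega)$ to a cocycle $\omega$ is the same step the paper later spells out in the ruth setting (proof of Theorem \ref{thm : cohomology}). Your extra check that $i_0, i_1$, and hence $\phi_0,\phi_1$, are Lie algebroid morphisms, so that they induce maps in cohomology at all, is a harmless addition.
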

\end{mdframed}
\end{minipage}
 \vspace{0.2cm}

\section{Flows in Lie algebroids}\label{sec : Flows in Lie algebroids}

Every vector bundle $p:E \to M$ gives rise to a Lie algebroid $\mathfrak{D}(E) \Rightarrow M$ of \emph{derivative operators} \cite{Schwarzbach_Mackenzie}. Its space of sections $\Gamma(\mathfrak{D}(E))$ can be identified with those $\mathbb{R}$-linear maps $L : \Gamma(E) \to \Gamma(E)$, for which there is a vector field $v \in \X(M)$, such that
\begin{align*}
 & L(fs) = (\mathscr{L}_vf)s+f(Ls), & (f,s) \in C^{\infty}(M) \times \Gamma(E).
\end{align*}
Every section $s \in \Gamma(E)$ gives rise to a vertical vector field on $E$,
\begin{align*}
 & s^{\mathrm{v}} \in \X_{\mathrm{vert}}(E), & s^{\mathrm{v}}(e):=\tfrac{d}{d\epsilon}\left(e+\epsilon s(p(e)) \right)|_{\epsilon=0} \in T_eE.
\end{align*}
Derivative operators $L:\Gamma(E) \to \Gamma(E)$ are in bijective correspondence with linear vector fields $\mathcal{V} \in \mathfrak{X}_{\mathrm{lin}}(E)$, satisfying the condition that
\begin{align*}
 [\mathcal{V},s^{\mathrm{v}}] = (Ls)^{\mathrm{v}}
\end{align*}
for all $s \in \Gamma(E)$. The vector field $\mathcal{V}$ is related, via the canonical projection $p:E \to M$, to a vector field $v \in \mathfrak{X}(M)$ and therefore $p$ intertwines their flows:
\begin{align*}
 p \circ \phi_{\epsilon}^{\mathcal{V}} = \phi_{\epsilon}^v \circ p
\end{align*}
wherever they are defined. If $v$ is a complete vector field, then $\mathcal{V}$ is also a complete vector field, in which case their flows define a smooth family of vector bundle isomorphisms
\begin{align*}
 \xymatrix{
 E \ar[d] \ar[r]^{\phi_{\epsilon}^{\mathcal{V}}} & E \ar[d] \\
 M \ar[r]_{\phi_{\epsilon}^v} & M
 }
\end{align*}
Denote by $\phi^{\dagger}_{\epsilon} : \Gamma(E) \to \Gamma(E)$ the map
\begin{align*}
& \phi^{\dagger}_{\epsilon}(e)(x) := \left(\phi_{\epsilon}^{\mathcal{V}}\right)^{-1}\left(e(\phi_{\epsilon}^v(x))\right), 
& (x,e) \in M \times \Gamma(E).
\end{align*}
Then the following confition holds:
\begin{align}\label{eq : flow of derivative operator}
 & \tfrac{d}{d\epsilon} \phi_{\epsilon}^{\dagger}(s) = \phi_{\epsilon}^{\dagger}(Ls), & s \in \Gamma(E),
\end{align}
and it characterizes the (local) flow of $L$.

\begin{example}[Flow of a section of a Lie algebroid]\label{ex : Flow of a section of a Lie algebroid}\normalfont
 Let $A \Rightarrow M$ be a Lie algebroid, and $a \in \Gamma(A)$ be a section whose anchor $v=\rho_A(a) \in \X(M)$ is a complete vector field. Then
 \begin{align*}
  L = [a,\cdot] : \Gamma(A) \to \Gamma(A)
 \end{align*}
is a derivative operator, and its flow $\phi_{\epsilon}$ is by Lie algebroid isomorphisms:
\begin{align*}
 & \phi_{\epsilon}^{\dagger}[b,c] = [\phi_{\epsilon}^{\dagger}(b),\phi_{\epsilon}^{\dagger}(c)], 
 & b,c \in \Gamma(A).
\end{align*}
Thus
\begin{align*}
 &\phi : A \times I \to A, & \phi(b,\epsilon) = \phi_{\epsilon}(b)
\end{align*}
is a Lie algebroid morphism, the {\bf flow of $a$} \cite[Appendix A]{CrainicFernandes_annals}. Consider the unique vector bundle map
\begin{align*}
 \phi + a \mathrm{d}\epsilon : A \times TI \to A
\end{align*}
which restricts to $\phi$ on $A \times I$, and which maps $\tfrac{\partial}{\partial \epsilon}$ to $a$. Then $\phi + a \mathrm{d}\epsilon$ is a Lie algebroid morphism. Indeed, since
\begin{align*}
 \left(\phi + a \mathrm{d}\epsilon\right)^*(\omega) = \phi_{\epsilon}^*(\omega) + \mathrm{d}\epsilon \wedge \phi_{\epsilon}^*(i_a\omega),
\end{align*}
we obtain, as a consequence:
\begin{multline*}
 \mathrm{d}_{A \times TI}\left(\phi + a \mathrm{d}\epsilon\right)^*(\omega) - \left(\phi + a \mathrm{d}\epsilon\right)^*(\mathrm{d}_B\omega) = \\ = 
 \left( \mathrm{d}_A\phi_{\epsilon}^*(\omega) - \phi_{\epsilon}^*\mathrm{d}_B\omega \right) + \mathrm{d}\epsilon \wedge \left(\tfrac{d}{d\epsilon}\phi_{\epsilon}^*(\omega) - \phi_{\epsilon}^*(\mathscr{L}_a\omega)  \right) = 0.
\end{multline*}
\end{example}
Thus:

\vspace{0.2cm}
\begin{minipage}{0.9\textwidth}
 \begin{mdframed}[backgroundcolor=blue!5]
The flow of a complete section $a \in \Gamma(A)$ defines a Lie algebroid homotopy
 \begin{align*}
  \phi + a \mathrm{d}\epsilon : A \times TI \to A.
 \end{align*}  
 \end{mdframed}
\end{minipage}

\section{Representations up to homotopy}\label{sec : Representations up to homotopy}

Let $A \Rightarrow M$ be a Lie algebroid, and
\begin{align*}
 \mathcal{E} = \oplus_j \mathcal{E}_j
\end{align*}
be a graded vector bundle on $M$. Equip the space $\Omega(A;\mathcal{E}) =\Omega(A)\otimes \Gamma(\mathcal{E})$ with the total grading
\begin{align*}
  \Omega^p(A;\mathcal{E}) = \oplus_{i+j=p} \Omega^i(A;\mathcal{E}_j).
 \end{align*}

\begin{definition}[Ruth]\label{def : ruth}\normalfont
A {\bf representation up to homotopy} $\mathscr{R}$ of $A$ on $\mathcal{E}$ consists of a linear map

\begin{center}
\begin{minipage}{0.5\textwidth}
 \begin{mdframed}[backgroundcolor=green!5]
 \vspace{-0.2cm}
\begin{align*}
 \mathrm{D}_{\mathscr{R}} : \Omega(A;\mathcal{E}) \to \Omega(A;\mathcal{E}),
\end{align*}  
 \end{mdframed}
\end{minipage}
\end{center}
which:

\vspace{0.2cm}
\begin{minipage}{0.9\textwidth}
 \begin{mdframed}[backgroundcolor=olive!3]
\begin{enumerate}[align-left]
 \item[Ruth1)] \label{Ruth1} has degree one, 
 \begin{align*}
  \mathrm{D}_{\mathscr{R}}\Omega^p(A;\mathcal{E}) \subset \Omega^{p+1}(A;\mathcal{E});
 \end{align*}
 \item[Ruth2)] \label{Ruth2} is a derivation: for all $\omega \in \Omega^p(A)$ and $\eta \in \Omega^q(A;\mathcal{E})$,
 \begin{align*}
  \mathrm{D}_{\mathscr{R}}(\omega \wedge \eta) = \mathrm{d}_A(\omega) \wedge \eta + (-1)^p \omega \wedge \mathrm{D}_{\mathscr{R}}(\eta);
 \end{align*}
 \item[Ruth3)] \label{Ruth3} squares to zero,
 \begin{align*}
  \mathrm{D}_{\mathscr{R}}^2=0.
 \end{align*}
\end{enumerate}
 \end{mdframed}
\end{minipage}
\end{definition} 

For brevity, we will refer to a representation up to homotopy as a {\bf ruth}, and write
\begin{align*}
 \mathscr{R} \ : \ A \overset{\mathrm{D}_{\mathscr{R}}}{\curvearrowright} \mathcal{E}
\end{align*}
as a shorthand notation. We think of the ruth $\mathscr{R}$ as an object ``over $A$''. We denote by
\begin{align*}
 & \Omega(\mathscr{R}):=\Omega(A,\mathcal{E}), & \mathrm{H}(\mathscr{R}):=\mathrm{H}\left(\Omega(A,\mathcal{E}),\mathrm{D}_{\mathscr{R}}\right)
\end{align*}
the ensuing modules of forms and cohomology (respectively over the rings $\Omega(A)$ and $\mathrm{H}(A)$).

\begin{example}[Ruths of length 1]\normalfont
 If $\mathcal{E}$ is concentrated in degrees 0 and 1, a ruth $\mathscr{R} \ : \ A \overset{}{\curvearrowright} \mathcal{E}$ consists of
 \begin{enumerate}[align-left]
  \item[a)] a vector bundle map $\partial : \mathcal{E}_0 \to \mathcal{E}_1$;
  \item[b)] $A$-connections $\nabla^{\mathcal{E}_i} : A \curvearrowright \mathcal{E}_i$ which are compatible with $\partial$,
  \begin{align*}
   \partial \nabla^{\mathcal{E}_0} = \nabla^{\mathcal{E}_1} \partial;
  \end{align*}
 \item[c)] a two-form $K \in \Omega^2(A;\mathcal{E}_1^* \otimes \mathcal{E}_0)$ satisfying
 \begin{align*}
  && \mathrm{R}_{\nabla^{\mathcal{E}_0}} = K \circ \partial,
  && \mathrm{R}_{\nabla^{\mathcal{E}_1}} = \partial \circ K,
  && \mathrm{d}_{\nabla}(K)=0,
 \end{align*}
 where $\nabla : A \curvearrowright \mathcal{E}_1^* \otimes \mathcal{E}_0$ is the connection induced by $\nabla^{\mathcal{E}_0}$ and $\nabla^{\mathcal{E}_1}$.
 \end{enumerate}
\end{example}

\begin{example}[Dual ruth]\normalfont
Let $\mathcal{E}$ be a graded vector bundle, with dual
\begin{align*}
 & \mathcal{E}^*:=\bigoplus_j (\mathcal{E}^*)_j, & (\mathcal{E}^*)_j = (\mathcal{E}_{-j})^*.
\end{align*}
Then the bilinear pairing
 \begin{align*}
  & \langle\cdot\,,\cdot\rangle:\mathcal{E}^* \times_M \mathcal{E} \to \mathbb{R}
 \end{align*}
induces a degree-zero linear map
 \begin{align}\label{eq : pairing as deg zero}
\langle\cdot\,,\cdot\rangle : \mathcal{E}^* \otimes \mathcal{E} \to \underline{\mathbb{R}},
 \end{align}
 where $\underline{\mathbb{R}}$ denotes the graded vector space consisting of $M \times \mathbb{R}$ in degree zero. Let now $\mathscr{R} = (\mathrm{D}_{\mathscr{R}} : A \acts \mathcal{E})$ be a ruth. Then \eqref{eq : pairing as deg zero} induces a pairing \cite[Example 4.1]{Abad_Crainic}
 \begin{align*}
  \langle\cdot,\cdot\rangle : \Omega(A,\mathcal{E}^*) \times \Omega(A,\mathcal{E}) \to \Omega(A)
 \end{align*}
given, for all $\eta \in \Omega^p(A;(\mathcal{E})^*_{i})$ and $\tau \in \Omega^q(A;\mathcal{E}_j)$, by
 \begin{align*}
  \langle \eta, \tau \rangle (a_1,...,a_{p+q}) & = \sum_{\sigma \in \mathfrak{S}_{(p,q)}} (-1)^{qi}\mathrm{sgn}(\sigma) \langle \eta(a_{\sigma(1)},...,a_{\sigma(p)}),\tau(a_{\sigma(p+1)},...,a_{\sigma(p+q)}) \rangle,
 \end{align*}
where $\mathfrak{S}_{(p,q)}$ denotes the set of $(p,q)$-shuffles. Note that the expression above vanishes if $i+j \neq 0$. 

The {\bf dual} ruth $\mathscr{R}^* = (\mathrm{D}_{\mathscr{R}^*} : A \acts \mathcal{E}^*)$ is uniquely determined by the condition that
 \begin{align*}
  \dd_A \langle \eta, \tau \rangle = \langle \mathrm{D}_{\mathscr{R}^*}(\eta), \tau \rangle + (-1)^{p} \langle \eta ,\mathrm{D}_{\mathscr{R}}(\tau)\rangle.
 \end{align*}
\end{example}

\subsection*{Morphisms of ruths}

Fix ruths
\begin{align*}
 & \mathscr{R}_A \ : \ A \overset{\mathrm{D}_{\mathscr{R}_A}}{\curvearrowright} \mathcal{E}_A,
 & \mathscr{R}_B \ : \ B \overset{\mathrm{D}_{\mathscr{R}_B}}{\curvearrowright} \mathcal{E}_B,
\end{align*}
on the respective Lie algebroids $A \Rightarrow M$ and $B \Rightarrow N$.

\vspace*{0.2cm}
\begin{mdframed}[backgroundcolor=gray!5]
\begin{definition}[Morphism of ruths]\label{def : Morphism of ruths}\normalfont
A {\bf morphism of ruths} $\Phi : \mathscr{R}_A \to \mathscr{R}_B$ consists of a Lie algebroid morphism $\phi : A \to B$, together with a linear map
\begin{center}
\begin{minipage}{0.5\textwidth}
 \begin{mdframed}[backgroundcolor=magenta!5]
 \vspace{-0.2cm}
\begin{align*}
 \Phi : \Omega(\mathscr{R}_B^*) \to \Omega(\mathscr{R}_A^*)
\end{align*}  
 \end{mdframed}
\end{minipage}
\end{center}
which:

\vspace{0.2cm}
\begin{minipage}{0.9\textwidth}
 \begin{mdframed}[backgroundcolor=olive!3]
\begin{enumerate}[align-left]
 \item[MRuth1)] has degree zero;
 \item[MRuth2)] is a module homomorphism over $\phi$;
 \begin{align*}
  &\Phi(\omega \wedge \eta) = \phi^*(\omega) \wedge \Phi(\eta), & (\omega,\eta) \in \Omega(B) \times \Omega(B;\mathcal{E}_B^*);
 \end{align*}
 \item[MRuth3)] is a cochain map:
 \begin{align*}
  &\Phi \circ \mathrm{D}_{\mathscr{R}_B^*} = \mathrm{D}_{\mathscr{R}_A^*} \circ \Phi.
 \end{align*}
\end{enumerate}
 \end{mdframed}
\end{minipage}
\vspace{0.2cm} 
\end{definition} 
\end{mdframed}

\begin{example}[Adjoint ruth]\label{ex : adjoint rep}\normalfont
 Let $\mathrm{ad}(A)$ denote the graded vector bundle with
 \begin{align*}
  & \mathrm{ad}(A)_0 = A,
  & \mathrm{ad}(A)_1 = TM.
 \end{align*}
A choice of linear connection $\nabla : TM \curvearrowright A$ induces $A$-connections
\begin{align*}
 & \nabla^{\mathrm{bas}} : A \curvearrowright A,
 & \nabla^{\mathrm{bas}}_ab:=\nabla_{\rho_A(b)}a+[a,b]_A,\\
 & \nabla^{\mathrm{bas}} : A \curvearrowright TM,
 & \nabla^{\mathrm{bas}}_av:=\rho_A(\nabla_{v}a)+[\rho_A(a),v].
\end{align*}
Denote also by $\mathrm{R}^{\mathrm{bas}}_{\nabla} \in \Omega^2(A;T^*M \otimes A)$ the two-form determined by
\begin{align*}
 \mathrm{R}^{\mathrm{bas}}_{\nabla}(a,b)v:=\nabla_v[a,b]_A-[\nabla_v(a),b]_A-[a,\nabla_vb]_A-\nabla_{\nabla^{\mathrm{bas}}_vb}a+\nabla_{\nabla^{\mathrm{bas}}_va}b
\end{align*}
for all $a,b \in \Gamma(A)$ and $v \in \Gamma(TM)$. Then
\begin{align*}
 \mathrm{D} = \rho_A + \mathrm{d}_{\nabla^{\mathrm{bas}}} + \mathrm{R}^{\mathrm{bas}}_{\nabla}
\end{align*}
is the structure map for a ruth $\mathscr{R} \ : \ A \curvearrowright \mathrm{ad}(A)$. If $\nabla' : TM \curvearrowright A$ is another choice of linear connection, and $\mathscr{R}' \ : \ A \overset{\mathrm{D}'}{\curvearrowright}\mathrm{ad}(A)$ is the ensuing ruth, there is an isomorphism of ruths \cite[Theorem 3.11]{Abad_Crainic}
\begin{align*}
 \Phi : \mathscr{R} \diffto \mathscr{R}'
\end{align*}
whose components are
\begin{align*}
 & \Phi^0 = \mathrm{id},
 & \Phi^1 = \nabla-\nabla'.
\end{align*}
The isomorphism class of this ruth is called the {\bf adjoint ruth} of $A$.
\end{example}

\subsection{Pullbacks of ruths by a Lie algebroid morphism}\label{subsec : Pullbacks of ruths by a Lie algebroid morphism}
 Let $\mathscr{R}_B = (\mathrm{D}_{\mathscr{R}_B} : B \acts \mathcal{E})$ be a ruth. Given a Lie algebroid morphism
 \begin{align*}
  \xymatrix{
  A \ar@{=>}[d] \ar[r]^{\phi} & B \ar@{=>}[d]\\
  M \ar[r]_{\underline{\phi}} & N
  }
 \end{align*}
write $\underline{\phi}^*(\mathcal{E})= M \times_N\mathcal{E}$ for the pullback bundle, and
 \begin{align}\label{eq : pullback morphism of ruths}
  \phi^* : \Omega(B;\mathcal{E}^*) \to  \Omega(A;\underline{\phi}^*(\mathcal{E}^*))
 \end{align}
for the induced map of modules. Because $\Omega(A;\underline{\phi}^*(\mathcal{E}^*))$ is spanned, as an $\Omega(A)$-module, by sections $\phi^*(\zeta) \in \Gamma(\underline{\phi}^*(\mathcal{E}^*))$, where $\zeta \in \Gamma(\mathcal{E}^*)$, a unique ruth
 \begin{align}\label{eq : pullback ruth}
  \phi^{\star}(\mathscr{R}_B) = (\mathrm{D}_{\phi^{\star}(\mathscr{R}_B)} : A \acts \underline{\phi}^*\mathcal{E}),
 \end{align}
exists, for which
\begin{align*}
 & \mathrm{D}_{\phi^{\star}(\mathscr{R}_B)} \phi^*(\zeta) = \phi^*(\mathrm{D}_{\mathscr{R}_B}\zeta), & \zeta \in \Gamma(\mathcal{E}^*).
\end{align*}
We call \eqref{eq : pullback ruth} the {\bf pullback ruth} of $\mathscr{R}_B$ by $\phi$. Note that \eqref{eq : pullback morphism of ruths} gives a {\bf pullback morphism} of ruths
\begin{align*}
 \phi_{\star} : \phi^{\star}(\mathscr{R}_B) \to \mathscr{R}_B.
\end{align*}

\vspace{0.2cm}
\begin{minipage}{0.9\textwidth}
 \begin{mdframed}[backgroundcolor=blue!5]
\begin{lemma}\label{lem : morphisms factor through pullback}
 Any morphism of ruths
 \begin{align*}
  \Phi : \mathscr{R}_A \to \mathscr{R}_B
 \end{align*}
covering a Lie algebroid morphism $\phi : A \to B$ factors through the pullback:
 \begin{align*}
  \xymatrix{
  & \phi^{\star}(\mathscr{R}_B) \ar[dr]^{\phi_{\star}} &\\
  \mathscr{R}_A \ar[ur]^{\overline{\Phi}} \ar[rr]_{\Phi} & & \mathscr{R}_B
  }
 \end{align*}
\end{lemma}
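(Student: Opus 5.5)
We have to construct a morphism of ruths $\overline{\Phi} : \mathscr{R}_A \to \phi^{\star}(\mathscr{R}_B)$ covering $\mathrm{id}_A$, that is, a degree-zero, $\Omega(A)$-linear cochain map
\begin{align*}
 \overline{\Phi} : \Omega(A;\underline{\phi}^*(\mathcal{E}_B^*)) \to \Omega(A;\mathcal{E}_A^*),
\end{align*}
such that $\overline{\Phi}\circ \phi^* = \Phi$, where $\phi^*$ is the pullback map \eqref{eq : pullback morphism of ruths}. The idea is that the pullback module $\Omega(A;\underline{\phi}^*(\mathcal{E}_B^*))$ is the base change $\Omega(A)\otimes_{\Omega(B)}\Omega(B;\mathcal{E}_B^*)$ along $\phi^*$. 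Then $\overline{\Phi}$ is forced to be the $\Omega(A)$-linear extension of $\Phi$:
\begin{align*}
 \overline{\Phi}\Big(\sum_k \omega_k \wedge \phi^*(\zeta_k)\Big) := \sum_k \omega_k \wedge \Phi(\zeta_k), \qquad \omega_k \in \Omega(A),\ \zeta_k \in \Gamma(\mathcal{E}_B^*).
\end{align*}

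\textbf{Step 1: $\overline{\Phi}$ is well defined.} This is the main obstacle, since a given element may be written as $\sum_k \omega_k\wedge\phi^*(\zeta_k)$ in many ways.

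First, degree $0$ forms. By MRuth2 with $\omega = f \in C^\infty(N)$ we get $\Phi(f\zeta) = (f\circ\underline{\phi})\,\Phi(\zeta)$. Hence $\Phi$ restricted to $\Gamma(\mathcal{E}_B^*)$ is $C^\infty(N)$-linear over $\underline{\phi}$. By the standard argument (localize with bump functions and use local frames of $\mathcal{E}_B^*$), it is induced by a bundle map of graded bundles $\underline{\phi}^*(\mathcal{E}_B^*)\to \Lambda A^*\otimes\mathcal{E}_A^*$ of total degree zero over $M$.

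So I would instead define $\overline{\Phi}$ pointwise. Take local frames $\{\zeta_\alpha\}$ of $\mathcal{E}_B^*$ near $\underline{\phi}(x)$. The pulled-back frames $\phi^*\zeta_\alpha$ form a basis of $\Omega(A;\underline{\phi}^*\mathcal{E}_B^*)$ over $\Omega(A)$ near $x$, so we may set
\begin{align*}
 \overline{\Phi}\Big(\sum_\alpha \omega_\alpha\,\phi^*\zeta_\alpha\Big)=\sum_\alpha \omega_\alpha\wedge\Phi(\zeta_\alpha).
\end{align*}
Independence of the frame follows from MRuth2 applied to transition functions $g_{\alpha\beta}\in C^\infty(N)$. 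One also has to check that $\Phi$ is local (support-decreasing), which follows from MRuth2 with bump functions.

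Second, higher-degree forms. Once $\overline{\Phi}$ is defined, MRuth2 for forms $\omega \in \Omega(B)$ of positive degree gives
\begin{align*}
 \Phi(\omega\wedge\zeta)=\phi^*\omega\wedge\Phi(\zeta)=\overline{\Phi}(\phi^*(\omega\wedge\zeta)).
\end{align*}
Therefore $\overline{\Phi}\circ\phi^*=\Phi$ on all of $\Omega(B;\mathcal{E}_B^*)$, not just on sections. By construction $\overline{\Phi}$ has degree zero and is $\Omega(A)$-linear, so it is a module homomorphism over $\mathrm{id}_A$.

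\textbf{Step 2: $\overline{\Phi}$ is a cochain map.} Both $\mathrm{D}_{\mathscr{R}_A^*}\circ\overline{\Phi}$ and $\overline{\Phi}\circ\mathrm{D}_{\phi^\star(\mathscr{R}_B)^*}$ satisfy the same Leibniz rule Ruth2 with respect to $\mathrm{d}_A$, because $\overline{\Phi}$ is $\Omega(A)$-linear. Hence their difference is $\Omega(A)$-linear. It therefore suffices to compare them on the generators $\phi^*\zeta$.

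On these generators, the defining property of the pullback ruth gives $\mathrm{D}\,\phi^*\zeta=\phi^*(\mathrm{D}_{\mathscr{R}_B^*}\zeta)$. (As the paper does in its definition of $\phi^{\star}$, I will take this property to hold for the dual ruth.) Then
\begin{align*}
 \overline{\Phi}\big(\phi^*\mathrm{D}_{\mathscr{R}_B^*}\zeta\big)=\Phi(\mathrm{D}_{\mathscr{R}_B^*}\zeta)=\mathrm{D}_{\mathscr{R}_A^*}\Phi(\zeta)=\mathrm{D}_{\mathscr{R}_A^*}\overline{\Phi}(\phi^*\zeta),
\end{align*}
using MRuth3. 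This shows $\overline{\Phi}$ is a cochain map.

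\textbf{Step 3: the factorization.} The pullback morphism $\phi_\star$ is $\phi^*$ itself, viewed as a map $\Omega(\mathscr{R}_B^*)\to\Omega(\phi^{\star}(\mathscr{R}_B)^*)$. Since morphisms act contravariantly on forms, the composite $\phi_\star\circ\overline{\Phi}$ acts on forms as $\overline{\Phi}\circ\phi^*$. By Step 1 this equals $\Phi$, so the diagram in Lemma \ref{lem : morphisms factor through pullback} commutes.
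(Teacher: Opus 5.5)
Your proposal is correct and is essentially the paper's proof: $\overline{\Phi}$ is the $\Omega(A)$-linear extension determined by $\overline{\Phi}(\phi^*\eta)=\Phi(\eta)$, which uses that the $\phi^*$-images span $\Omega(\phi^{\star}(\mathscr{R}_B^*))$ over $\Omega(A)$, and the cochain property is then checked on these generators via MRuth3. Your version is more complete than the paper's, which leaves two points implicit: you prove existence (well-definedness, via the $C^\infty(N)$-linearity of $\Phi$ over $\underline{\phi}$ and the induced bundle map), and you explain why checking $\mathrm{D}\overline{\Phi}=\overline{\Phi}\mathrm{D}$ on generators suffices.
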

 \end{mdframed}
\end{minipage}
\begin{proof}
First observe that \eqref{eq : pullback morphism of ruths} preserves the bigrading
\begin{align*}
 \phi^*\Omega^p(B;(\mathscr{E}_B)_i) \subset \Omega^p(\phi^!(B);(\underline{\phi}^*\mathscr{E}_B)_i)
\end{align*}
and that $\phi^*\Gamma(\mathscr{R}_B^*)$ spans $\Omega(\phi^{\star}(\mathscr{R}_B^*))$ over $\Omega(A)$. Therefore a unique morphism of modules of DGAs
\begin{align*}
 \overline{\Phi} : \Omega(\phi^{\star}(\mathscr{R}_B^*)) \to \Omega(\mathscr{R}_A^*)
\end{align*}
exists, for which
\begin{align*}
 \overline{\Phi}(\phi^*(\eta)):=\Phi(\eta)
\end{align*}
for each $\eta \in \Omega(\mathscr{R}_B^*)$. Therefore
\begin{align*}
 \overline{\Phi}(\mathrm{D}_{\phi^{\star}(\mathscr{R}_B^*)}\phi^*(\eta)) = \overline{\Phi}(\phi^*(\mathrm{D}_{\mathscr{R}_B^*}\eta)) = \Phi(\mathrm{D}_{\mathscr{R}_B^*}\eta)) = \mathrm{D}_{\mathscr{R}_A^*}\Phi(\eta) = \mathrm{D}_{\mathscr{R}_A^*}\overline{\Phi}(\phi^*(\eta))
\end{align*}
implies that $\overline{\Phi}$ is a morphism of ruths.
\end{proof}

\subsection{Pullbacks of ruths by a transverse map}\label{subsec : Pullbacks of ruths by a transverse map}

 Let $B \Rightarrow N$ be a Lie algebroid, let $f:M \to N$ be a smooth map which is \emph{transverse} to $B$, and let $f_! : f^!(B) \to B$ be the pullback morphism of Lie algebroids of Example \ref{ex : Pullback of a Lie algebroid by a transverse map}. If $\mathscr{R}_B = (\mathrm{D}_{\mathscr{R}_B} : B \acts \mathcal{E})$ is a ruth on $B$, we write
 \begin{align*}
  (f_!)^{\star}(\mathscr{R}_B) \ : \ f^!(B) \overset{\mathrm{D}_{(f_!)^{\star}(\mathscr{R}_B)}}{\curvearrowright} f^*(\mathcal{E})
 \end{align*}
 to denote the pullback of $\mathscr{R}_B$ under the Lie algebroid morphism $f_! : f^!(B) \to B$ as in subsection \ref{subsec : Pullbacks of ruths by a Lie algebroid morphism}, and by
 \begin{align*}
  & (f_!)_{\star} : (f_!)^{\star}(\mathscr{R}_B) \to \mathscr{R}_B
 \end{align*}
 the pullback morphism of ruths.

\vspace{0.2cm}
\begin{minipage}{0.9\textwidth}
 \begin{mdframed}[backgroundcolor=blue!5]
\begin{lemma}\label{lem : pullback morphism factors}
Let $\mathscr{R} = (\mathrm{D}_{\mathscr{R}} : B \acts \mathcal{E})$ be a ruth on $B \Rightarrow N$, and
\begin{align*}
 \xymatrix{
 A \ar[r]^{\phi} \ar@{=>}[d] & B \ar@{=>}[d]\\
 M \ar[r]_{f} & N
 }
\end{align*}
be a morphism of Lie algebroids, for which $f$ is transverse to $B$. Then the morphism of ruths $\phi_{\star} : \phi^{\star}(\mathscr{R}) \to \mathscr{R}$ factors through $(f_!)_{\star} : (f_!)^{\star}(\mathscr{R}) \to \mathscr{R}$:
\begin{align*}
  \xymatrix{
  & (f_!)^{\star}(\mathscr{R}) \ar[dr]^{(f_!)_{\star}} &\\
  \phi^{\star}(\mathscr{R}) \ar[ur]^{\overline{\phi_{\star}}} \ar[rr]_{\phi_{\star}} & & \mathscr{R}
  }
 \end{align*}
\end{lemma}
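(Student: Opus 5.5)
The plan is to factor $\phi$ itself through the pullback Lie algebroid $f^!(B)$, and then conclude with Lemma \ref{lem : morphisms factor through pullback}.

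First define the vector bundle map
\begin{align*}
 & \tilde{\phi} : A \to f^!(B) = TM \times_{TN} B,
 & \tilde{\phi}(a) := (\rho_A(a), \phi(a)),
\end{align*}
which covers $\mathrm{id}_M$. It takes values in $f^!(B)$ because a Lie algebroid morphism intertwines anchors, $Tf \circ \rho_A = \rho_B \circ \phi$. By construction $f_! \circ \tilde{\phi} = \phi$.

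Next, check that $\tilde{\phi}$ is a morphism of Lie algebroids. Since it covers the identity, this amounts to two things: compatibility of anchors, which holds because the anchor of $f^!(B)$ is the projection to $TM$, and compatibility of brackets. For the brackets, I would write $\tilde{\phi}(a)$ for $a \in \Gamma(A)$ as $\rho_A(a)$ paired with $\phi(a) = \sum_i g_i\, b_i\circ f$. The bracket on $f^!(B)$ is defined so that its $B$-component is computed by the usual pullback formula. The identity $\tilde{\phi}[a,a'] = [\tilde{\phi}a,\tilde{\phi}a']$ then reduces to the standard bracket compatibility for the morphism $\phi$ over $f$. Alternatively, and more in the paper's spirit, one can verify the cochain condition $\mathrm{d}_A\tilde{\phi}^* = \tilde{\phi}^*\mathrm{d}_{f^!(B)}$ on generators. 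Forms on $f^!(B)$ are generated over $C^\infty(M)$ and $\mathrm{d}_{f^!(B)}$ by functions on $M$ and by $f_!^*\beta$ with $\beta \in \Omega^1(B)$. On functions, the condition follows from the anchor compatibility. On $f_!^*\beta$ we get $\tilde{\phi}^*f_!^*\beta = \phi^*\beta$, and the condition follows because $f_!$ and $\phi$ are both morphisms. This generation step is where transversality is used, and I expect it to be the only real obstacle.

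With this in place, $\phi = f_!\circ\tilde{\phi}$. The pullback ruth then satisfies $\phi^\star(\mathscr{R}) = \tilde{\phi}^\star\big((f_!)^\star(\mathscr{R})\big)$, both living on $f^*\mathcal{E}$, because the two differentials agree on the generating sections $\phi^*\zeta = \tilde{\phi}^*f_!^*\zeta$. Hence we can take $\overline{\phi_\star} := \tilde{\phi}_\star$. Composing pullback morphisms gives $(f_!)_\star\circ\tilde{\phi}_\star = \phi_\star$: both send $\eta \in \Omega(\mathscr{R}^*)$ to $\phi^*\eta$, and both are module maps over $\phi^* = \tilde{\phi}^*f_!^*$. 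Equivalently, we may apply Lemma \ref{lem : morphisms factor through pullback} to the morphism $\phi_\star$, which covers $f_!\circ\tilde\phi$.
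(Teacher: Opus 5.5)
Your proposal is correct and follows the paper's own argument. You factor $\phi = f_! \circ \tilde{\phi}$ through $f^!(B)$, then use $\phi^{\star}(\mathscr{R}) = \tilde{\phi}^{\star}((f_!)^{\star}(\mathscr{R}))$ to write $\phi_{\star} = (f_!)_{\star} \circ \tilde{\phi}_{\star}$. Your explicit formula $\tilde{\phi}(a) = (\rho_A(a),\phi(a))$ and your check of $\mathrm{d}_A\tilde{\phi}^* = \tilde{\phi}^*\mathrm{d}_{f^!(B)}$ on generators supply a detail the paper only asserts: that $\phi$ factors through $f^!(B)$ by Lie algebroid morphisms.
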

 \end{mdframed}
\end{minipage}
\begin{proof}
 Since $\phi : A \to B$ factors as the composition of morphisms of Lie algebroids
 \begin{align*}
  \xymatrix{
  A \ar[r]^{\overline{\phi}} \ar@{=>}[d] & f^!(B) \ar[r]^{f_!} \ar@{=>}[d] & B \ar@{=>}[d]\\
  M \ar@{=}[r] & M \ar[r]_{f} & N
  }
 \end{align*}
we see that
 \begin{align*}
  \phi^{\star}(\mathscr{R}) & = \left(f_! \circ \overline{\phi} \right)^{\star}(\mathscr{R})\\
  & = \overline{\phi}^{\star}(f_!)^{\star}(\mathscr{R})\\
  & = \overline{\phi}^{\star}((f_!)^{\star}(\mathscr{R})),
 \end{align*}
 which shows that $\phi_{\star} : \phi^{\star}(\mathscr{R}) \to \mathscr{R}$ is the composition of
 \begin{align*}
  \phi^{\star}(\mathscr{R}) \stackrel{\overline{\phi}_{\star}}{\rmap} (f_!)^{\star}(\mathscr{R}) \stackrel{(f_!)_{\star}}{\rmap} \mathscr{R}.
 \end{align*}
\end{proof}

\section{Homotopy invariance of cohomology}\label{sec : Homotopy invariance of cohomology}

Consider the Lie algebroid map
\begin{align*}
 \xymatrix{
 A \times TI \ar@{=>}[d] \ar[r]^{\mathrm{pr}} & A \ar@{=>}[d] \\
 M \times I \ar[r]_{\mathrm{pr}} & M
 }
\end{align*}
and let
\begin{align*}
 \mathscr{R} \ : \ A \overset{\mathrm{D}_{\mathscr{R}}}{\curvearrowright} \mathcal{E}
\end{align*}
be a ruth on $A$. Consider the pullback ruth
\begin{align*}
 \pr^{\star}(\mathscr{R}) = \mathscr{R} \times TI \ : \ A \times TI \overset{\mathrm{D}_{\pr^{\star}(\mathscr{R})}}{\curvearrowright} \mathcal{E} \times I,
\end{align*}
and observe that there is a unique $C^{\infty}(M)$-linear map
 \begin{align*}
 \smallfint : \Omega^{\bullet}(\mathscr{R}^* \times TI) \to \Omega^{\bullet-1}(\mathscr{R}^*)
\end{align*}
 with the property that
 \begin{align*}
  & \smallfint(\omega \otimes \pr^*(\zeta)) = \left( \smallfint \omega \right) \otimes \zeta
 \end{align*}
for all $\omega \in \Omega^{p}(A \times TI)$ and $\zeta \in  \Gamma((\mathcal{E}^*)_j)$, where $\smallfint \omega$ stands for the fibre integral of \eqref{eq : fibre integral lie algebroid}.

\vspace{0.2cm}
\begin{minipage}{0.9\textwidth}
 \begin{mdframed}[backgroundcolor=blue!5]
\begin{lemma}\label{lem : stokes}
 For all $\eta \in \Omega(\mathscr{R}^* \times TI)$, we have that
\begin{align*}
 \mathrm{D}_{\mathscr{R}^*}\left(\smallfint \eta\right) + \smallfint \mathrm{D}_{\mathscr{R}^* \times TI}(\eta) = i_1^*(\eta)-i_0^*(\eta).
\end{align*}
\end{lemma}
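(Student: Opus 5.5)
The identity is linear in $\eta$, and both sides are compatible with a module structure, so I will reduce it to the scalar Stokes formula from the proof of Proposition \ref{pro : homotopy op lie alg}:
\begin{align*}
\dd_A \smallfint \omega + \smallfint \dd_{A\times TI}\omega = i_1^*\omega - i_0^*\omega .
\end{align*}
Since $\Omega(\mathscr{R}^*\times TI)$ is spanned over $\Omega(A\times TI)$ by the pulled-back sections $\pr^*(\zeta)$ with $\zeta\in\Gamma((\mathcal{E}^*)_j)$, it suffices to check the identity on elements $\eta=\omega\otimes\pr^*(\zeta)=\omega\wedge\pr^*(\zeta)$, with $\omega\in\Omega^p(A\times TI)$. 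Both sides are additive in $\eta$, so this reduction is legitimate.

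The key preliminary is how $\smallfint$ interacts with wedging by pulled-back forms. I claim that $\smallfint(\omega\wedge\pr^*\theta)=(\smallfint\omega)\wedge\theta$ for $\theta\in\Omega(\mathscr{R}^*)$. This holds because $\iota_{\partial_t}\pr^*\theta=0$, and because $\pr^*\theta$ is independent of $t$, so it can be pulled out of $\int_0^1\dd t$. By definition we also have $i_k^*(\omega\wedge\pr^*\zeta)=i_k^*(\omega)\wedge\zeta$, since $\pr\circ i_k=\mathrm{id}$. Finally, $\mathrm{D}_{\pr^\star(\mathscr{R}^*)}\pr^*\zeta=\pr^*(\mathrm{D}_{\mathscr{R}^*}\zeta)$ by the definition of the pullback ruth in \S\ref{subsec : Pullbacks of ruths by a Lie algebroid morphism}. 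Here I take for granted that $\pr^\star(\mathscr{R})^*=\pr^\star(\mathscr{R}^*)$, which follows from the defining pairing identity, since $\pr^*$ intertwines the pairings.

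With these facts, I compute for $\eta=\omega\wedge\pr^*\zeta$. By Ruth2),
\begin{align*}
\mathrm{D}_{\mathscr{R}^*\times TI}\eta=\dd_{A\times TI}\omega\wedge\pr^*\zeta+(-1)^p\omega\wedge\pr^*(\mathrm{D}_{\mathscr{R}^*}\zeta),
\end{align*}
so that
\begin{align*}
\smallfint\mathrm{D}_{\mathscr{R}^*\times TI}\eta=(\smallfint\dd_{A\times TI}\omega)\wedge\zeta+(-1)^p(\smallfint\omega)\wedge\mathrm{D}_{\mathscr{R}^*}\zeta .
\end{align*}
On the other side, since $\smallfint\omega$ has degree $p-1$,
\begin{align*}
\mathrm{D}_{\mathscr{R}^*}\bigl((\smallfint\omega)\wedge\zeta\bigr)=(\dd_A\smallfint\omega)\wedge\zeta+(-1)^{p-1}(\smallfint\omega)\wedge\mathrm{D}_{\mathscr{R}^*}\zeta .
\end{align*}
When I add the two, the $\mathrm{D}_{\mathscr{R}^*}\zeta$ terms cancel by sign. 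What remains is $(\dd_A\smallfint\omega+\smallfint\dd_{A\times TI}\omega)\wedge\zeta$, which equals $(i_1^*\omega-i_0^*\omega)\wedge\zeta=i_1^*\eta-i_0^*\eta$ by the scalar Stokes formula.

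The main obstacle is the sign bookkeeping in the module-compatibility claim for $\smallfint$, specifically whether $\smallfint(\omega\wedge\pr^*\theta)$ carries a sign when $\theta$ has nonzero internal degree $j$. Contracting $\partial_t$ into the first slot passes it only through the form part of $\omega$, never through $\theta$, so no sign should appear. The cancellation above also requires that the sign in the definition of $\smallfint$ on coefficients matches the sign in Ruth2), which uses the form degree $p$ of $\omega$. I would verify both in the convention $\smallfint(\omega\otimes\pr^*\zeta)=(\smallfint\omega)\otimes\zeta$ fixed in the statement. Of lesser concern is the claim that the pulled-back sections span the module, which holds because $\pr^*\mathcal{E}^*$ is globally generated by pulled-back sections.
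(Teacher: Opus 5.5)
Your proposal is correct and follows essentially the same route as the paper: reduce to $\eta=\omega\otimes\pr^*(\zeta)$, expand both terms with the derivation rule Ruth2), cancel the two terms involving $\mathrm{D}_{\mathscr{R}^*}\zeta$ by sign, and conclude with the scalar Stokes formula for fibre integration. The only difference is that you state explicitly facts the paper uses without comment. The main one is $\smallfint(\omega\wedge\pr^*\theta)=(\smallfint\omega)\wedge\theta$, which is what lets you integrate the term $\omega\wedge\pr^*(\mathrm{D}_{\mathscr{R}^*}\zeta)$, since $\mathrm{D}_{\mathscr{R}^*}\zeta$ is not itself a pulled-back section.
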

 \end{mdframed}
\end{minipage}

\begin{proof}
 Consider $\eta=\omega \otimes \pr^*(\zeta)$, where $\omega \in \Omega^p(A \times TI)$ and $\zeta \in \Gamma((\mathcal{E}^*)_j)$. By definition of fibre integration,
 \begin{align*}
  \smallfint (\omega \otimes \pr^*(\zeta)) & = \left(\smallfint \omega \right) \otimes \zeta,
 \end{align*}
and therefore
\begin{align*}
 \mathrm{D}_{\mathscr{R}^*} \smallfint (\omega \otimes \pr^*(\zeta)) & = \left(\dd_A \smallfint \omega \right) \otimes \zeta + (-1)^{p-1}\left(\smallfint \omega \right) \wedge \mathrm{D}_{\mathscr{R}^*}\zeta.
\end{align*}
On the other hand,
 \begin{align*}
  \mathrm{D}_{\mathscr{R}^* \times TI}(\omega \otimes \pr^*(\zeta)) & = \dd_{A \times TI}(\omega) \otimes \pr^*(\zeta) + (-1)^p \omega \wedge \mathrm{D}_{\mathscr{R}^* \times TI}\pr^*(\zeta)\\
  & = \dd_{A \times TI}(\omega) \otimes \pr^*(\zeta) + (-1)^p \omega \wedge \pr^*(\mathrm{D}_{\mathscr{R}^*}\zeta)
 \end{align*}
 and so
 \begin{align*}
  \smallfint \mathrm{D}_{\mathscr{R}^* \times TI}(\omega \otimes \pr^*(\zeta)) & = \left(\smallfint \dd_{A \times TI}(\omega) \right)\otimes \zeta + (-1)^p \left(\smallfint \omega \right)\wedge \mathrm{D}_{\mathscr{R}^*}(\zeta)
 \end{align*}
Therefore by \eqref{eq : fibre integral lie algebroid},
 \begin{align*}
  \mathrm{D}_{\mathscr{R}^*} \smallfint \eta + \smallfint \mathrm{D}_{\mathscr{R}^* \times TI}(\eta) & = \left(\smallfint \dd_{A \times TI}(\omega) + \dd_A \smallfint \omega \right)\otimes \zeta\\
  & = \left(i_1^*(\omega) - i_0^*(\omega)\right)\otimes \zeta\\
  & = i_1^*(\eta) - i_0^*(\eta),
 \end{align*}
 and this finishes the proof.
\end{proof}

%

\vspace{0.2cm}
\begin{minipage}{0.9\textwidth}
 \begin{mdframed}[backgroundcolor=red!5]
\begin{theorem}\label{thm : cohomology}
 The endpoints of a homotopy of morphisms of ruths
 \begin{align*}
  \Phi : \mathscr{R}_A \times TI \to \mathscr{R}_B
 \end{align*}
induce the same map in cohomology:
 \begin{align*}
  \Phi_0^* = \Phi_1^* : \mathrm{H}(\mathscr{R}_B^*) \to \mathrm{H}(\mathscr{R}_A^*).
 \end{align*}
\end{theorem}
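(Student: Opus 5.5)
The strategy is to run the classical homotopy-operator argument, with Lemma \ref{lem : stokes} playing the role of Stokes' theorem. A homotopy $\Phi : \mathscr{R}_A \times TI \to \mathscr{R}_B$ is, by Definition \ref{def : Morphism of ruths}, a degree-zero cochain map
\begin{align*}
 \Phi : \Omega(\mathscr{R}_B^*) \to \Omega(\mathscr{R}_A^* \times TI),
\end{align*}
which is a module map over some Lie algebroid homotopy $\phi : A \times TI \to B$. Here $\mathscr{R}_A \times TI = \pr^{\star}(\mathscr{R}_A)$. The endpoints are the composites
\begin{align*}
 & \Phi_0 := i_0^* \circ \Phi, & \Phi_1 := i_1^* \circ \Phi,
\end{align*}
where $i_k^* : \Omega(\mathscr{R}_A^* \times TI) \to \Omega(\mathscr{R}_A^*)$ denotes restriction to the endpoints $i_k : A \to A \times TI$ of \eqref{eq : endpoints of A times TI}.

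First I will check that each $i_k^*$ is a morphism of ruths $\mathscr{R}_A \to \mathscr{R}_A \times TI$ over $i_k$. Since $\pr \circ i_k = \mathrm{id}_A$, we have $i_k^{\star}\pr^{\star}(\mathscr{R}_A) = \mathscr{R}_A$. Then $i_k^*$ is the pullback morphism of subsection \ref{subsec : Pullbacks of ruths by a Lie algebroid morphism}. Concretely, it commutes with the differentials on generators, because $i_k^*\pr^*\zeta = \zeta$ and $\mathrm{D}\pr^*\zeta = \pr^*\mathrm{D}\zeta$; by the derivation rule it then commutes on all forms. Consequently each $\Phi_k$ is a cochain map and induces a map $\Phi_k^*$ in cohomology.

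Next I define the operator
\begin{align*}
 & \mathfrak{h}_\Phi : \Omega^{\bullet}(\mathscr{R}_B^*) \to \Omega^{\bullet-1}(\mathscr{R}_A^*), & \mathfrak{h}_\Phi := \smallfint \circ \Phi .
\end{align*}
For $\eta \in \Omega(\mathscr{R}_B^*)$, Lemma \ref{lem : stokes} applied to $\Phi(\eta)$, together with MRuth3), gives
\begin{align*}
 \mathrm{D}_{\mathscr{R}_A^*}\mathfrak{h}_\Phi(\eta) + \mathfrak{h}_\Phi(\mathrm{D}_{\mathscr{R}_B^*}\eta) &= \mathrm{D}_{\mathscr{R}_A^*}\smallfint\Phi(\eta) + \smallfint \mathrm{D}_{\mathscr{R}_A^*\times TI}\Phi(\eta)\\
 &= i_1^*\Phi(\eta) - i_0^*\Phi(\eta)\\
 &= \Phi_1(\eta) - \Phi_0(\eta).
\end{align*}
So $\mathfrak{h}_\Phi$ is a cochain homotopy between $\Phi_0$ and $\Phi_1$. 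In particular, if $\eta$ is closed then $\Phi_1(\eta) - \Phi_0(\eta)$ is exact, which gives $\Phi_0^* = \Phi_1^*$ on $\mathrm{H}(\mathscr{R}_B^*)$. This exactly mirrors Proposition \ref{pro : homotopy op lie alg}.

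The computation itself is routine once Lemma \ref{lem : stokes} is available. The only step needing care is confirming that the target of $\Phi$ is precisely the complex on which Lemma \ref{lem : stokes} applies. This requires two checks:
\begin{enumerate}[i)]
 \item the dual of the pulled-back ruth $\pr^{\star}(\mathscr{R}_A)$ is the pullback of the dual ruth, i.e.\ $(\mathscr{R}_A \times TI)^* = \mathscr{R}_A^* \times TI$, so that $\mathrm{D}_{\mathscr{R}_A^*\times TI}$ is the differential Lemma \ref{lem : stokes} uses;
 \item the restriction maps $i_k^*$ match the ones appearing in Lemma \ref{lem : stokes}.
\end{enumerate}
For i), I will verify it from the defining pairing identity of the dual ruth on generators $\pr^*\zeta$, $\pr^*e$, using that $\pr^*$ respects the pairing and intertwines the differentials.
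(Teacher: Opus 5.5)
Your proposal is correct and is essentially the paper's proof: you define $\mathfrak{H}_\Phi$ as fibre integration composed with $\Phi$, apply Lemma~\ref{lem : stokes} to $\Phi(\eta)$, and use MRuth3) to get $\Phi_1(\eta)-\Phi_0(\eta)=\mathrm{D}_{\mathscr{R}_A^*}\mathfrak{H}_\Phi(\eta)+\mathfrak{H}_\Phi(\mathrm{D}_{\mathscr{R}_B^*}\eta)$. The extra checks you flag, namely that the restrictions $i_k^*$ are cochain maps and that $(\mathscr{R}_A\times TI)^*=\mathscr{R}_A^*\times TI$, are points the paper leaves implicit, and your arguments for them are sound.
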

 \end{mdframed}
\end{minipage}
\begin{proof}
 Let $\Phi : \mathscr{R}_A \times TI \to \mathscr{R}_B$ be a morphism of ruths, and obsrve that the diagram below commutes:
 \begin{align*}
  \xymatrix{
  & \Omega(\mathscr{R}_B^*) \ar[d]^{\Phi} \ar[dl]_{\Phi_0} \ar[dr]^{\Phi_1} & \\
  \Omega(\mathscr{R}_A^*) & \Omega(\mathscr{R}_A^* \times TI) \ar[l]^{i_0^*} \ar[r]_{i_1^*} & \Omega(\mathscr{R}_A^*)
  }
 \end{align*}
Let $\eta \in \Omega(\mathscr{R}_B^*)$, and apply Lemma \ref{lem : stokes} to $\Phi(\eta)$ to deduce that
\begin{align*}
 \Phi_1(\eta)-\Phi_0(\eta) = \mathrm{D}_{\mathscr{R}_A^*}\mathfrak{H}_{\Phi}(\eta) + \mathfrak{H}_{\Phi}(\mathrm{D}_{\mathscr{R}_B^*}\eta),
\end{align*}
where $\mathfrak{H}_{\Phi}$ denotes the homotopy operator
\begin{align*}
 & \mathfrak{H}_{\Phi} : \Omega(\mathscr{R}_B^*) \to \Omega(\mathscr{R}_A^*),
 & \mathfrak{H}_{\Phi}(\eta):=\smallfint \Phi(\eta).
\end{align*}
In particular, if $\mathrm{D}_{\mathscr{R}_B^*}(\eta) = 0$,
\begin{align*}
 \Phi_1(\eta) = \Phi_0(\eta) + \mathrm{D}_{\mathscr{R}_A^*}\mathfrak{H}_{\Phi}(\eta)
\end{align*}
shows that the maps
\begin{align*}
 \Phi_0,\Phi_1 : \mathrm{H}(\mathscr{R}_B^*) \to \mathrm{H}(\mathscr{R}_A^*)
\end{align*}
coincide.
\end{proof}

\section{Local triviality}\label{sec : Local triviality}

A {\bf homotopy} of ruths is a morphism of the form
\begin{align*}
 \Phi : \mathscr{R}_A \times TI \to \mathscr{R}_B,
\end{align*}
where
\begin{align*}
& \mathscr{R}_A \ : \ A \overset{\mathrm{D}_{\mathscr{R}_A}}{\curvearrowright} \mathcal{E}_A,
& \mathscr{R}_B \ : \ B \overset{\mathrm{D}_{\mathscr{R}_B}}{\curvearrowright} \mathcal{E}_B,
\end{align*}
are ruths on $A \Rightarrow M$ and $B \Rightarrow N$, respectively, and
\begin{align*}
 \mathscr{R}_A \times TI = \mathrm{pr}^{\star}(\mathscr{R}_A)
\end{align*}
denotes the pullback ruth of $\mathscr{R}_A$ by the canonical projection $\mathrm{pr}:A \times TI \to A$.

\vspace{0.2cm}
\begin{minipage}{0.9\textwidth}
 \begin{mdframed}[backgroundcolor=blue!5]
\begin{proposition}
There is a canonical construction which, to every complete section $a \in \Gamma(A)$ and ruth $\mathscr{R} = (D_{\mathscr{R}} : A \acts \mathcal{E})$ assigns a homotopy of ruths
\begin{align*}
 \Phi: \mathscr{R} \times TI \to \mathscr{R}
\end{align*}
with the property that
\begin{enumerate}[align-left]
 \item[a)] $\Phi_0 = \id_{\mathscr{R}}$,
 \item[b)] $\Phi_{\epsilon}$ is an isomorphism of ruths for each $\epsilon \in [0,1]$;
 \item[c)] $\tfrac{d}{d\epsilon}\Phi_{\epsilon}^*(\eta) = \Phi_{\epsilon}^*( \mathrm{L}_a \eta)$,
\end{enumerate}
for all $\eta \in \Omega(\mathscr{R}^*)$, where $\mathrm{L}_a$ denotes the degree zero derivation of $\Omega(\mathscr{R}^*)$
\begin{align*}
 \mathrm{L}_a  = [i_a,\mathrm{D}_{\mathscr{R}^*}].
\end{align*}   
\end{proposition}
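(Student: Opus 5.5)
Let $\phi_\epsilon : A \to A$ be the flow of $a$ from Example \ref{ex : Flow of a section of a Lie algebroid}, and let $\phi + a\,\mathrm{d}\epsilon : A \times TI \to A$ be the associated Lie algebroid homotopy. The plan is to build $\Phi$ as a flow on the module $\Omega(\mathscr{R}^*)$ lying over $\phi_\epsilon^*$, in the same way the flow of $a$ lies over the derivative operator $[a,\cdot]$.

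\textbf{Step 1: $\mathrm{L}_a$ is a derivative operator.} By Ruth2) for $\mathscr{R}^*$, the operator $\mathrm{L}_a = [i_a,\mathrm{D}_{\mathscr{R}^*}]$ is a degree-zero operator on $\Omega(\mathscr{R}^*)$ satisfying
\begin{align*}
 \mathrm{L}_a(\omega\wedge\eta) = \mathscr{L}_a\omega\wedge\eta + \omega\wedge \mathrm{L}_a\eta .
\end{align*}
It also commutes with $\mathrm{D}_{\mathscr{R}^*}$, since $[\mathrm{D},[i_a,\mathrm{D}]] = [\mathrm{D}^2,i_a]=0$. Restricted to $\Gamma(\mathcal{E}^*)$, however, $\mathrm{L}_a$ is not a derivative operator on $\mathcal{E}^*$: it also has components $\Gamma(\mathcal{E}^*_j)\to\Omega^k(A;\mathcal{E}^*_{j-k})$. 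To handle this, regard $\Omega(A;\mathcal{E}^*)=\Gamma(\wedge A^*\otimes\mathcal{E}^*)$ as sections of a single finite-rank vector bundle $W$. Then $\mathrm{L}_a$ is a derivative operator on $W$ with symbol $\rho(a)$, because $\mathscr{L}_a$ is a derivative operator on $\wedge A^*$.

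\textbf{Step 2: integrate the flow.} Since $\rho(a)$ is complete, Section \ref{sec : Flows in Lie algebroids} gives a flow of vector bundle automorphisms of $W$ covering $\phi^{\rho(a)}_\epsilon$. Let $\Phi_\epsilon = \phi^\dagger_\epsilon : \Omega(\mathscr{R}^*)\to\Omega(\mathscr{R}^*)$ be the induced map on sections. By \eqref{eq : flow of derivative operator}, it satisfies
\begin{align*}
 \tfrac{d}{d\epsilon}\Phi_\epsilon(\eta)=\Phi_\epsilon(\mathrm{L}_a\eta), \qquad \Phi_0=\mathrm{id},
\end{align*}
which gives a) and c). Each $\Phi_\epsilon$ is invertible, with inverse the backward flow. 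The module property MRuth2) over $\phi_\epsilon^*$ follows by uniqueness of solutions of linear ODEs. For fixed $\omega,\eta$, both $\epsilon\mapsto\Phi_\epsilon(\omega\wedge\eta)$ and $\epsilon\mapsto\phi_\epsilon^*\omega\wedge\Phi_\epsilon\eta$ solve the same equation $\tfrac{d}{d\epsilon}X_\epsilon = \Phi_\epsilon(\mathrm{L}_a(\cdot))$, using $\tfrac{d}{d\epsilon}\phi_\epsilon^*=\phi_\epsilon^*\mathscr{L}_a$. Both start at $\omega\wedge\eta$, so they agree. The cochain property MRuth3) follows the same way. Both $\Phi_\epsilon\mathrm{D}$ and $\mathrm{D}\Phi_\epsilon$ satisfy $\tfrac{d}{d\epsilon}Y_\epsilon=Y_\epsilon\mathrm{L}_a$, because $[\mathrm{D},\mathrm{L}_a]=0$, and both equal $\mathrm{D}$ at $\epsilon=0$. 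The map preserves degree, since $\mathrm{L}_a$ preserves total degree. This gives b).

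\textbf{Step 3: assemble the homotopy.} Mimicking Example \ref{ex : Flow of a section of a Lie algebroid}, define $\Phi:\Omega(\mathscr{R}^*)\to\Omega(\mathscr{R}^*\times TI)$ by
\begin{align*}
 \Phi(\eta) = \Phi_\epsilon(\eta) + \mathrm{d}\epsilon\wedge\Phi_\epsilon(i_a\eta).
\end{align*}
This is a module map over $(\phi+a\,\mathrm{d}\epsilon)^*$, because $i_a$ is a derivation and by Step 2. To check the cochain condition, compute $\mathrm{D}_{\mathscr{R}^*\times TI}$ on an element of this form. The $\epsilon$-free part gives $\mathrm{D}\Phi_\epsilon\eta=\Phi_\epsilon\mathrm{D}\eta$. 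The $\mathrm{d}\epsilon$ part gives $\tfrac{d}{d\epsilon}\Phi_\epsilon\eta - \mathrm{D}\Phi_\epsilon i_a\eta$, with sign conventions to be checked. This equals $\Phi_\epsilon(\mathrm{L}_a\eta - \mathrm{D} i_a\eta)=\Phi_\epsilon(i_a\mathrm{D}\eta)$, which is exactly the $\mathrm{d}\epsilon$ part of $\Phi(\mathrm{D}\eta)$.

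\textbf{Main obstacle.} The delicate point is Step 1. One has to justify that $\mathrm{L}_a$ is a genuine derivative operator on a finite-rank bundle, so that the flow theory of Section \ref{sec : Flows in Lie algebroids} applies. The $\mathrm{D}_{\mathscr{R}^*}$ mixes form degrees, and the Koszul signs must be tracked carefully in Step 3.
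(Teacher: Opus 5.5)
Your construction is the paper's. Your bundle $W=\wedge A^*\otimes\mathcal{E}^*$ is the paper's $\mathcal{A}=\oplus_p\mathcal{A}^p$, and $\Phi_\epsilon$ is the same flow of the linear vector field attached to $\mathrm{L}_a$. Step 3 uses the same formula $\Phi_\epsilon(\eta)+\mathrm{d}\epsilon\wedge\Phi_\epsilon(i_a\eta)$ with the same verification, and your signs are right.

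The gap is in the uniqueness arguments of Step 2.

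\textbf{MRuth2).} The expression ``$\tfrac{d}{d\epsilon}X_\epsilon=\Phi_\epsilon(\mathrm{L}_a(\cdot))$'' is not an equation in $X_\epsilon$. The derivative of $c_2(\epsilon)=\phi_\epsilon^*\omega\wedge\Phi_\epsilon\eta$ is $\phi_\epsilon^*(\mathscr{L}_a\omega)\wedge\Phi_\epsilon\eta+\phi_\epsilon^*\omega\wedge\Phi_\epsilon(\mathrm{L}_a\eta)$. The derivative of $c_1(\epsilon)=\Phi_\epsilon(\omega\wedge\eta)$ is $\Phi_\epsilon(\mathrm{L}_a(\omega\wedge\eta))$. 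Identifying the two is MRuth2) itself, applied to $\mathscr{L}_a\omega\wedge\eta$ and $\omega\wedge\mathrm{L}_a\eta$, so the argument is circular.

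\textbf{MRuth3).} The equation $\dot Y_\epsilon=Y_\epsilon\mathrm{L}_a$ does hold for both candidates. But uniqueness for this right-multiplication equation does not follow from the flow equation \eqref{eq : flow of derivative operator} alone. If you try to show that $Y_\epsilon\Phi_{-\epsilon}$ is constant, you are left with the term $Y_\epsilon(\mathrm{L}_a\Phi_{-\epsilon}-\Phi_{-\epsilon}\mathrm{L}_a)$.

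\textbf{The missing ingredient.} You need the fact that the flow commutes with its generator, $\Phi_\epsilon\mathrm{L}_a=\mathrm{L}_a\Phi_\epsilon$. The paper proves this explicitly in \eqref{eq : Phi L commute}, using $[\mathcal{V},(\phi^{\mathcal{V}}_\epsilon)^*\eta^{\mathrm{v}}]=(\phi^{\mathcal{V}}_\epsilon)^*[\mathcal{V},\eta^{\mathrm{v}}]$. You could instead differentiate the group law $\Phi_{\epsilon+\delta}=\Phi_\delta\Phi_\epsilon$ in $\delta$. You also need the analogue $\phi_\epsilon^*\mathscr{L}_a=\mathscr{L}_a\phi_\epsilon^*$ on $\Omega(A)$.

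With these in hand, the argument closes:
\begin{itemize}
\item The derivation property of $\mathrm{L}_a$ shows that $c_1$ and $c_2$ both solve the genuine initial-value problem $\dot c=\mathrm{L}_a c$, $c(0)=\omega\wedge\eta$.
\item The identity $[\mathrm{D},\mathrm{L}_a]=0$ shows that $\Phi_\epsilon(\mathrm{D}\eta)$ and $\mathrm{D}\Phi_\epsilon(\eta)$ both solve the same problem with $c(0)=\mathrm{D}\eta$.
\item Uniqueness then follows from the flow equation itself, since $\tfrac{d}{d\epsilon}\Phi_{-\epsilon}(c(\epsilon))=\Phi_{-\epsilon}(\dot c-\mathrm{L}_a c)=0$.
\end{itemize}

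So the delicate point is this commutation lemma, not Step 1. Step 1 only needs the Leibniz rule $\mathrm{L}_a(f\eta)=f\,\mathrm{L}_a\eta+(\mathscr{L}_{\rho_A(a)}f)\eta$.
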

 \end{mdframed}
\end{minipage}

\begin{proof}
 Consider the vector bundles
 \begin{align*}
  \mathcal{A}^p:= \oplus_{i+j=p}\wedge^iA^* \otimes (\mathcal{E}^*)_j
 \end{align*}
 on $M$, and set $\mathcal{A}:=\oplus_{p \geqslant 0}\mathcal{A}^p$. We thus have identifications
 \begin{align*}
  & \Omega^p(\mathscr{R}^*) = \Gamma(\mathcal{A}^p),
  & \Omega(\mathscr{R}^*) = \Gamma(\mathcal{A}).
 \end{align*}
Let $\mathrm{D}_{\mathscr{R}^*}$ be the structure map of $\mathscr{R}^*$, and let
\begin{align*}
 & i_a : \Omega(\mathscr{R}^*) \to \Omega(\mathscr{R}^*), & (i_a\eta)(a_1,...,a_p):=\eta(a,a_1,...,a_p)
\end{align*}
denote the degree $-1$ derivation of interior product with $a \in \Gamma(A)$. Then
\begin{align*}
 \mathrm{L}_a:=[i_a,\mathrm{D}_{\mathscr{R}^*}]
\end{align*}
is a degree $0$ derivation, and
\begin{align*}
 \mathrm{D}_{\mathscr{R}^*}\mathrm{L}_a = \mathrm{L}_a\mathrm{D}_{\mathscr{R}^*}.
\end{align*}
Because
\begin{align*}
 & \mathrm{L}_a(f\eta) = f(\mathrm{L}_a\eta)+(\mathscr{L}_af)\eta, & (f,\eta) \in C^{\infty}(M) \times \Omega(\mathscr{R}^*),
\end{align*}
$\mathrm{L}$ is a derivative operator $\mathrm{L}:\Gamma(\mathscr{A}) \to \Gamma(\mathscr{A})$ which is compatible with the decomposition $\mathscr{A}=\oplus_p \mathcal{A}^p$. Therefore the linear vector field
\begin{align*}
 \mathcal{V} \in \mathfrak{X}_{\mathrm{lin}}(\mathscr{A})
\end{align*}
that corresponds to $\mathrm{L}_a$ is related to $v:=\rho_A(a) \in \mathfrak{X}(M)$ by the canonical projection $p:\mathscr{A} \to M$, and is tangent to each $\mathscr{A}^p$. Because $\mathcal{V}$ is linear, it is complete provided that $v$ is complete, in which case its flow is by vector bundle isomorphisms
\begin{align*}
 \xymatrix{
 \mathscr{A} \ar[d] \ar[r]^{\phi^{\mathcal{V}}_{\epsilon}}_{\simeq} & \mathscr{A} \ar[d] \\
 M \ar[r]_{\phi^{v}_{\epsilon}}^{\simeq} & M
 }
\end{align*}
This defines maps of sections
\begin{align*}
 & \Phi_{\epsilon} : \Gamma(\mathscr{A}) \to \Gamma(\mathscr{A}),
 & \Phi_{\epsilon}(\eta):=\left(\phi^{\mathcal{V}}_{\epsilon}\right)^{-1} \circ \eta \circ \phi^{v}_{\epsilon},
\end{align*}
or, as we will prefer to write, $\Phi_{\epsilon} : \Omega(\mathscr{R}^*) \to \Omega(\mathscr{R}^*)$. By construction \eqref{eq : flow of derivative operator}, a) and c) are satisfied:
\begin{align*}
 & \Phi_0 = \mathrm{id},
 & \tfrac{d}{d\epsilon}\Phi_{\epsilon}^*(\eta) = \Phi_{\epsilon}^*( \mathrm{L}_a \eta).
\end{align*}
We claim that also b) is satisfied --- that is, that each each $\Phi_{\epsilon}$ is an isomorphism of ruths, covering the Lie algebroid isomorphisms
\begin{align*}
 \varphi_{\epsilon} : A \diffto A
\end{align*}
given by the flow of $a \in \Gamma(A)$ as in Example \ref{ex : Flow of a section of a Lie algebroid}. Let us first observe that, for $\eta \in \Omega(\mathscr{R}^*)$, we have that
\begin{align*}
 [\mathcal{V},\Phi_{\epsilon}(\eta)^{\mathrm{v}}] & = \left(\mathrm{L}_a\Phi_{\epsilon}(\eta)\right)^{\mathrm{v}}
\end{align*}
by definition of the correspondence between $\mathrm{L}_a$ and $\mathcal{V}$, and also
\begin{align*}
 [\mathcal{V},\left(\Phi_{\epsilon}(\eta)\right)^{\mathrm{v}}] & = \left[\mathcal{V},(\phi^{\mathcal{V}}_{\epsilon})^*(\eta^{\mathrm{v}}) \right] = (\phi^{\mathcal{V}}_{\epsilon})^*\left[\mathcal{V},\eta^{\mathrm{v}} \right] = (\phi^{\mathcal{V}}_{\epsilon})^*(\mathrm{L}_a\eta)^{\mathrm{v}}
 = \left(\Phi_{\epsilon}(\mathrm{L}_a\eta)\right)^{\mathrm{v}}.
\end{align*}
Therefore $\Phi_{\epsilon}$ and $\mathrm{L}_a$ commute:
\begin{align}\label{eq : Phi L commute}
 \Phi_{\epsilon} \mathrm{L}_a = \mathrm{L}_a \Phi_{\epsilon}.
\end{align}
\noindent \underline{\emph{$\Phi_{\epsilon}$ satisfies MRuth2).}} Let $\omega \in \Omega(A)$ and $\eta \in \Omega(\mathscr{R}^*)$. Consider the curves
\begin{align*}
 & c_1(\epsilon):=\Phi_{\epsilon}(\omega \wedge \eta),
 & c_2(\epsilon):=\varphi_{\epsilon}^*(\omega) \wedge \Phi_{\epsilon}(\eta)
\end{align*}
in $\Omega(\mathscr{R}^*)$. Because they start at $\omega \wedge \eta$ and because of \eqref{eq : Phi L commute},
\begin{align*}
 \tfrac{d}{d\epsilon}c_1(\epsilon) & = \Phi_{\epsilon}\mathrm{L}_a(\omega \wedge \eta)\\
 & = \mathrm{L}_ac_1(\epsilon),\\
 \tfrac{d}{d\epsilon}c_2(\epsilon) & = \varphi_{\epsilon}^*(\mathscr{L}_a\omega) \wedge \Phi_{\epsilon}(\eta)+\phi_{\epsilon}^*(\omega) \wedge \Phi_{\epsilon}(\mathrm{L}_a\eta) \\
 & = \left(\mathscr{L}_a\varphi_{\epsilon}^*(\omega)\right) \wedge \Phi_{\epsilon}(\eta)+\phi_{\epsilon}^*(\omega) \wedge \left(\mathrm{L}_a\Phi_{\epsilon}(\eta)\right) \\
 & = \mathrm{L}_ac_2(\epsilon)
\end{align*}
we deduce that $c_1,c_2:[0,1] \to \Omega(\mathscr{R}^*)$ satisfy the same initial-value problem
\begin{align*}
 & \tfrac{d}{d\epsilon}c(\epsilon) = \mathrm{L}c(\epsilon),
 & c(0) = \omega \wedge \eta,
\end{align*}
and therefore
\begin{align*}
\Phi_{\epsilon}(\omega \wedge \eta) = \varphi_{\epsilon}^*(\omega) \wedge \Phi_{\epsilon}(\eta).
\end{align*}
\noindent \underline{\emph{$\Phi_{\epsilon}$ satisfies MRuth3).}} Let $\eta \in \Omega(\mathscr{R}^*)$. Consider the curves
\begin{align*}
 & c_1(\epsilon):=\Phi_{\epsilon}(\mathrm{D}_{\mathscr{R}^*}\eta),
 & c_2(\epsilon):=\mathrm{D}_{\mathscr{R}^*}\Phi_{\epsilon}(\eta)
\end{align*}
in $\Omega(\mathscr{R}^*)$, and compute
\begin{align*}
 \tfrac{d}{d\epsilon}c_1(\epsilon) & = \Phi_{\epsilon}\mathrm{L}(\mathrm{D}_{\mathscr{R}^*}\eta) = \mathrm{L}c_1(\epsilon),\\
 \tfrac{d}{d\epsilon}c_2(\epsilon) & = \mathrm{D}_{\mathscr{R}^*}\Phi_{\epsilon}(\mathrm{L}\eta) = \mathrm{L}c_2(\epsilon)
\end{align*}
to deduce that $\Phi_{\epsilon}(\mathrm{D}_{\mathscr{R}^*}\eta) = \mathrm{D}_{\mathscr{R}^*}\Phi_{\epsilon}(\eta)$. \\

\noindent \underline{\emph{$\Phi_{\epsilon}$ assemble into a homotopy of ruths $\mathscr{R} \times TI \to \mathscr{R}$.}} We claim finally that the morphisms of ruths $\Phi_{\epsilon}$ assemble into a homotopy of ruths $\Phi : \mathscr{R} \times TI \to \mathscr{R}$ covering the homotopy $\varphi : A \times TI \to A$ given by the flow of $a$. Indeed, define
\begin{align*}
 & \widetilde{\Phi} : \Omega(\mathscr{R}^*) \to \Omega(\mathscr{R}^* \times TI),
 & \widetilde{\Phi}(\eta) := \Phi_{\epsilon}(\eta) + \mathrm{d}\epsilon \wedge \Phi_{\epsilon}(i_a\eta).
\end{align*}
Then $\widetilde{\Phi}$ satisfies MRuth1) and MRuth2), and because
\begin{align*}
 \mathrm{D}_{\mathscr{R}^* \times TI}\widetilde{\Phi}(\eta) & = \mathrm{D}_{\mathscr{R}^*}\Phi_{\epsilon}^*(\eta)  + \mathrm{d}\epsilon \wedge \tfrac{d}{d\epsilon}\Phi_{\epsilon}^*(\eta) - \mathrm{d}\epsilon \wedge \mathrm{D}_{\mathscr{R}^*}\Phi_{\epsilon}^*(i_a\eta)\\
 \Phi^*(\mathrm{D}_{\mathscr{R}^*}\eta) & = \Phi_{\epsilon}^*(\mathrm{D}_{\mathscr{R}^*}\eta)+ \mathrm{d}\epsilon \wedge \Phi_{\epsilon}^*(i_a\mathrm{D}_{\mathscr{R}^*}\eta)
\end{align*}
and
\begin{align*}
 \tfrac{d}{d\epsilon}\Phi_{\epsilon}^*(\eta) & = \Phi_{\epsilon}^*(\mathrm{L}\eta) = \Phi_{\epsilon}^*(\mathrm{D}_{\mathscr{R}^*}i_a\eta)+\Phi_{\epsilon}^*(i_a\mathrm{D}_{\mathscr{R}^*}\eta),
\end{align*}
we deduce that also MRuth3) holds. This concludes the proof.
\end{proof}

\vspace{0.2cm}
\begin{minipage}{0.9\textwidth}
 \begin{mdframed}[backgroundcolor=red!5]
\begin{theorem}\label{thm : local triviality ruths}
 Let $\phi : A \times TI \to B$ be a morphism of Lie algebroids, and let $\mathscr{R}$ be a ruth on $B$. Then $\phi^{\star}(\mathscr{R})$ and $\phi_0^{\star}(\mathscr{R}) \times TI$ are isomorphic ruths on $A \times TI$.
\end{theorem}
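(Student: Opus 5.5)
The plan is to reduce the statement to the preceding Proposition, applied to the flow of a suitable complete time-dependent section. This mirrors the classical argument in \cite{Frejlich}.

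First, we form the pullback algebroid $\underline{\phi}^!(B)$ over $M\times I$, which is legitimate because $\phi$ is a morphism out of $A\times TI$ and so its base map is transverse in the relevant sense. Alternatively, and more simply, we work directly with the pullback ruth $\phi^{\star}(\mathscr{R})$ on $A\times TI$. Write $\partial_\epsilon$ for the section $(0,\tfrac{\partial}{\partial \epsilon})$ of $A\times TI$. Its anchor $\tfrac{\partial}{\partial\epsilon}$ is not complete on $M\times I$, but its flow is defined for the times needed to carry the slice $\{0\}$ to the slice $\{\epsilon\}$, and this is enough.

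The core step mimics the Proposition with $a=\partial_\epsilon$ on the Lie algebroid $A\times TI$ and the ruth $\phi^{\star}(\mathscr{R})$. Set
\begin{align*}
 \mathrm{L}:=[i_{\partial_\epsilon},\mathrm{D}_{\phi^{\star}(\mathscr{R})^*}],
\end{align*}
a degree zero derivative operator on $\Omega(\phi^{\star}(\mathscr{R})^*)=\Gamma(\mathcal{A})$ covering $\tfrac{\partial}{\partial\epsilon}$. The corresponding linear vector field $\mathcal{V}$ has a flow through vector bundle isomorphisms from $\mathcal{A}|_{M\times\{0\}}$ to $\mathcal{A}|_{M\times\{\epsilon\}}$. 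Parallel transport along $\mathcal{V}$ then gives a bundle isomorphism
\begin{align*}
 \Psi:\mathrm{pr}^*\bigl(\mathcal{A}|_{M\times\{0\}}\bigr)\to\mathcal{A}.
\end{align*}
We restrict $\Psi$ to the components $\wedge^0 A^*\otimes\mathcal{E}^*$ and define the candidate isomorphism by extending $\Omega(A\times TI)$-linearly. On the exterior-form parts of $\mathcal{A}$, the flow of $\mathrm{L}$ restricted to $\Omega(A\times TI)$ is the flow of $\mathscr{L}_{\partial_\epsilon}$, which is the trivial translation. So $\Psi$ agrees with the $\Omega(A)$-module structure, and MRuth2) holds by the same ODE-uniqueness argument used in the Proposition. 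Concretely, this means comparing $c_1(\epsilon)=\Psi(\omega\wedge\eta)$ and $c_2(\epsilon)=\omega\wedge\Psi(\eta)$, which solve the same initial value problem.

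For MRuth3), that is, intertwining $\mathrm{pr}^{\star}i_0^{\star}\phi^{\star}(\mathscr{R})=\phi_0^{\star}(\mathscr{R})\times TI$ with $\phi^{\star}(\mathscr{R})$, we decompose
\begin{align*}
 \mathrm{D}_{\phi^{\star}(\mathscr{R})^*}=\mathrm{D}^{\epsilon}+\mathrm{d}\epsilon\wedge\mathrm{L}.
\end{align*}
This decomposition is valid because $\mathrm{L}=[i_{\partial_\epsilon},\mathrm{D}]$ and $i_{\partial_\epsilon}\mathrm{D}^\epsilon=0$ on forms with no $\mathrm{d}\epsilon$-component. Here $\mathrm{D}^\epsilon$ is an $\epsilon$-dependent family of ruth differentials on $A$. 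The identity $\mathrm{D}^2=0$ yields $\tfrac{d}{d\epsilon}\mathrm{D}^{\epsilon}=[\mathrm{L},\mathrm{D}^{\epsilon}]$. The key claim is then that the transported differential $\Psi^{-1}\mathrm{D}^\epsilon\Psi$ is constant in $\epsilon$, and therefore equal to $\mathrm{D}^0=\mathrm{D}_{\phi_0^{\star}(\mathscr{R})^*}$. Its $\mathrm{d}\epsilon$-component is $\Psi^{-1}\mathrm{L}\Psi=\partial_\epsilon$, which is exactly the $\mathrm{d}\epsilon$-part of the structure of $\phi_0^\star(\mathscr{R})\times TI$. Both claims follow from the ODE argument: the curves $\Psi\mathrm{D}^0\eta$ and $\mathrm{D}^\epsilon\Psi\eta$ satisfy $\tfrac{d}{d\epsilon}c=\mathrm{L}c$ with the same initial value.

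I expect the main obstacle to be bookkeeping rather than a conceptual gap. One must check that $\mathrm{L}$ preserves the subspace of $\mathrm{d}\epsilon$-free forms, so that the flow acts on the $\Omega(A)$-part and respects the bigrading. One must also handle the incompleteness of $\tfrac{\partial}{\partial\epsilon}$ on the manifold with boundary $M\times I$. For the latter I would either extend $\phi$ slightly past $I$, or solve the linear ODE along $I$ directly, since it is a linear ODE in the fibres over each $x\in M$. Finally, because $\mathcal{A}$ consists of forms with values in the dual, $\Psi$ is a priori a map on dual coefficients. Dualizing it gives the required isomorphism of ruths in the sense of Definition \ref{def : Morphism of ruths}, covering $\mathrm{id}_{A\times TI}$.
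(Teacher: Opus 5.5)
Your proposal is correct and is essentially the paper's argument: you flow the linear vector field of $\mathrm{L}=[i_{\partial/\partial\epsilon},\mathrm{D}_{\phi^{\star}(\mathscr{R})^*}]$ out of the slice $\epsilon=0$, use ODE uniqueness to get MRuth2) and the intertwining $\Psi\mathrm{D}^0=\mathrm{D}^{\epsilon}\Psi$ of the $\mathrm{d}\epsilon$-free parts (the paper's Steps 2--3), and then match the $\mathrm{d}\epsilon$-components through $\mathrm{L}\leftrightarrow\partial/\partial\epsilon$, which is the paper's Step 4 with $\widetilde{\Phi}(\eta)=\Phi(\eta)+\mathrm{d}\epsilon\wedge\Phi(i_{\partial/\partial\epsilon}\eta)$. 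Delete your opening claim that the base map of $\phi$ is automatically transverse to $B$: it is false in general (e.g.\ for $B$ the zero Lie algebroid over a positive-dimensional manifold), though you never use it; and no dualization is needed at the end, since $\Psi$ and $\Psi^{-1}$ are already mutually inverse morphisms in the paper's contravariant sense.
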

 \end{mdframed}
\end{minipage}
\begin{proof}
Fix a Lie algebroid morphism
\begin{align*}
 \xymatrix{
 A \times TI \ar@{=>}[d] \ar[r]^{\phantom{12}\phi} & B \ar@{=>}[d] \\
 M \times I \ar[r]_{\phantom{12}f} & N
 }
\end{align*}
and a ruth $\mathscr{R}_B \ : \ B \overset{\mathrm{D}_{\mathscr{R}}}{\acts} \mathcal{E}$ on $B$. Consider the pullback ruth
 \begin{align*}
  \phi^{\star}(\mathscr{R}_B) \ : \ A \times TI \overset{\mathrm{D}_{\phi^{\star}(\mathscr{R}_B)}}{\acts} f^*(\mathcal{E})
 \end{align*}
on $A \times TI$. \\

\noindent \underline{\emph{Step 1. The local flow by translations.}} Consider the section
\begin{align*}
 \tfrac{\partial}{\partial \epsilon} \in \Gamma(A \times TI),
\end{align*}
with ensuing degree-zero derivation
\begin{align*}
 \mathrm{L} : \Omega(\phi^{\star}(\mathscr{R}_B^*)) \to \Omega(\phi^{\star}(\mathscr{R}_B^*))
\end{align*}
and linear vector field
\begin{align*}
 & \mathcal{V} \in \mathfrak{X}_{\mathrm{lin}}(\mathcal{A}_{TI}), & \mathcal{A}_{TI} = \bigoplus_{i,j}\wedge^i(A \times TI)^* \otimes f^*(\mathcal{E}^*)_j
\end{align*}
The local flow $\phi^{\mathcal{V}}$ of $\mathcal{V}$ gives a vector bundle isomorphism
\begin{align*}
\xymatrix{
 i_0^*(\mathcal{A}_{TI}) \times [0,1] \ar[d] \ar[r]^{\phantom{1234567}\phi^{\mathcal{V}}}_{\phantom{1234567}\simeq} & \mathcal{A}_{TI} \ar[d] \\
 M \times I \ar@{=}[r] & M \times I,
 }
\end{align*}
where
\begin{align*}
 i_0^*(\mathcal{A}_{TI}) = M \times_{M \times I}\mathcal{A}_{TI}.
\end{align*}
Observe that there is a canonical surjective linear map
\begin{align*}
 q : \Gamma\left( i_0^*(\mathcal{A}_{TI})\right) \to \Omega ( \phi_0^{\star}(\mathscr{R}_B^*) ),
\end{align*}
given by
\begin{align*}
 & q(\eta)_x(a_1,...,a_p) = \eta_{(x,0)}(a_1,...,a_p),
 & a_1,...,a_p \in A_x.
\end{align*}
Write
\begin{align*}
 \Phi_{\epsilon} : \Omega( \phi^{\star}(\mathscr{R}_B^*) ) \to \Omega ( \phi_0^{\star}(\mathscr{R}_B^*) )
\end{align*}
for the composition of
\begin{align*}
 (\phi^{\mathcal{V}}_{\epsilon})^{\dagger} : \Gamma\left( \mathcal{A}_{TI} \right) \to \Gamma\left( i_0^*(\mathcal{A}_{TI})\right)
\end{align*}
with $q$, and note that it has degree zero, i.e., satisfies MRuth1). \\

\noindent \underline{\emph{Step 2. $\Phi_{\epsilon}$ satisfies MRuth2).}}  Let $\omega \in \Omega(A)$ and $\eta \in \Gamma\left( \mathcal{A}_{TI} \right)$. Consider the curves
\begin{align*}
 c_1,c_2:[0,1] \to \Omega( \phi_0^{\star}(\mathscr{R}_B^*) )
\end{align*}
given by
\begin{align*}
 & c_1(\epsilon) = \Phi_{\epsilon}(\omega \wedge \eta),
 & c_2(\epsilon) = \omega \wedge \Phi_{\epsilon}(\eta).
\end{align*}
For each $0 < \delta < 1$, the smooth curves
\begin{align*}
 \gamma_1,\gamma_2:[0,1-\delta] \to \Omega((\phi|_{A \times T[0,\delta]})^{\star}(\mathscr{R}_B^*))
\end{align*}
given by
\begin{align*}
 && \gamma_1(\epsilon) = (\phi^{\mathcal{V}}_{\epsilon})^{\dagger}(\omega \wedge \eta)
 && \text{and}
 && \gamma_2(\epsilon) = \omega \wedge (\phi^{\mathcal{V}}_{\epsilon})^{\dagger}(\eta),
\end{align*}
and where $\phi|_{A \times T[0,\delta]}$ denotes the restriction of $\phi$ to $A \times T[0,\delta] \subset A \times I$. Then note that $\gamma_1$ and $\gamma_2$ are respectively lifts of $c_1$ and $c_2$, in the sense that
\begin{align*}
 & c_1|_{[0,1-\delta]} = i_0^* \gamma_1,
 & c_2|_{[0,1-\delta]} = i_0^* \gamma_2,
\end{align*}
where
\begin{align*}
 i_0^* : \Omega((\phi|_{A \times T[0,\delta]})^{\star}(\mathscr{R}_B^*)) \to \Omega(\phi_0^{\star}(\mathscr{R}_B^*))
\end{align*}
is the map induced by the identification $i_0 : A \to A \times T\{0\}$. Because both $\gamma_1$ and $\gamma_2$ are solutions to the initial-value problem
\begin{align*}
 & \dot{\gamma} = \mathrm{L}\gamma,
 & \gamma(0) = \omega \wedge \eta,
\end{align*}
we deduce that $c_1|_{[0,1-\delta]} = c_2|_{[0,1-\delta]}$. Because $\delta < 1$ is arbitrary, it follows by continuity that
\begin{align*}
  & \Phi_{\epsilon}(\omega \wedge \eta) = \omega \wedge \Phi_{\epsilon}(\eta), & \epsilon \in I.
\end{align*}
\noindent \underline{\emph{Step 3. $\Phi_{\epsilon}$ satisfies MRuth3).}} 
Similarly to Step 2, one proves that the curves
\begin{align*}
 & c_1(\epsilon) = \Phi_{\epsilon}(\mathrm{D}_{\phi^{\star}(\mathscr{R}_B^*)}\eta),
 & c_2(\epsilon) = \mathrm{D}_{\phi_0^{\star}(\mathscr{R}_B^*)}\Phi_{\epsilon}(\eta)
\end{align*}
coincide by showing that the curves
\begin{align*}
 & \gamma_1(\epsilon) = (\phi^{\mathcal{V}}_{\epsilon})^{\dagger}(\mathrm{D}_{\phi^{\star}(\mathscr{R}_B^*)} \eta),
 & \gamma_2(\epsilon) = \mathrm{D}_{\phi^{\star}(\mathscr{R}_B^*)} (\phi^{\mathcal{V}}_{\epsilon})^{\dagger}(\eta)
\end{align*}
defined for all $\epsilon \in [0,1-\delta]$ satisfy the same initial-value problem. This implies that
\begin{align*}
 \Phi : \phi_0^{\star}(\mathscr{R}_B^*) \times I \to \phi^{\star}(\mathscr{R}_B^*)
\end{align*}
is a morphism of ruths.\\

\noindent \underline{\emph{Step 4. A homotopy of ruths.}} The linear map
\begin{align*}
 \widetilde{\Phi} : \Omega(\phi^{\star}(\mathscr{R}_B^*)) \to \Omega (\phi_0^{\star}(\mathscr{R}_B^*) \times TI )
\end{align*}
given by
\begin{align*}
 \widetilde{\Phi}(\eta) = \Phi(\eta) + \mathrm{d}\epsilon \wedge \Phi \left(i_{\tfrac{\partial}{\partial \epsilon}}\eta \right).
\end{align*}
defines a morphism of ruths
\begin{align*}
 \widetilde{\Phi} : \phi_0^{\star}(\mathscr{R}_B^*) \times TI \to \phi^{\star}(\mathscr{R}_B^*),
\end{align*}
since
\begin{align*}
 \mathrm{D}_{\phi_0^{\star}(\mathscr{R}_B^*) \times TI}\widetilde{\Phi}(\eta) = \mathrm{D}_{\phi_0^{\star}(\mathscr{R}_B^*)}\Phi(\eta) + \mathrm{d}\epsilon \wedge \tfrac{d}{d\epsilon}\Phi(\eta) - \mathrm{d}\epsilon \wedge \mathrm{D}_{\phi_0^{\star}(\mathscr{R}_B^*)}\Phi \left(i_{\tfrac{\partial}{\partial \epsilon}}\eta \right)
\end{align*}
and
\begin{align*}
 \widetilde{\Phi}(\mathrm{D}_{\phi^{\star}(\mathscr{R}_B^*)}\eta) = \Phi(\mathrm{D}_{\phi^{\star}(\mathscr{R}_B^*)}\eta) + \mathrm{d}\epsilon \wedge \Phi \left(i_{\tfrac{\partial}{\partial \epsilon}}\mathrm{D}_{\phi^{\star}(\mathscr{R}_B^*)}\eta \right)
\end{align*}
imply that
\begin{align*}
 \mathrm{D}_{\phi_0^{\star}(\mathscr{R}_B^*) \times TI}\widetilde{\Phi} - \widetilde{\Phi}\mathrm{D}_{\phi^{\star}(\mathscr{R}_B^*)}
\end{align*}
equals
\begin{align*}
 \left(\mathrm{D}_{\phi_0^{\star}(\mathscr{R}_B^*)}\Phi - \Phi \mathrm{D}_{\phi^{\star}(\mathscr{R}_B^*)}\right) + \mathrm{d}\epsilon \wedge \left(\tfrac{d}{d\epsilon}\Phi - \Phi \left(\mathrm{D}_{\phi^{\star}(\mathscr{R}_B^*)}i_{\tfrac{\partial}{\partial \epsilon}} + i_{\tfrac{\partial}{\partial \epsilon}}\mathrm{D}_{\phi^{\star}(\mathscr{R}_B^*)}\right)\right),
\end{align*}
and therefore
\begin{align*}
 \mathrm{D}_{\phi_0^{\star}(\mathscr{R}_B^*) \times TI}\widetilde{\Phi} = \widetilde{\Phi}\mathrm{D}_{\phi^{\star}(\mathscr{R}_B^*)},
\end{align*}
and so
\begin{align*}
 \widetilde{\Phi} : \phi_0^{\star}(\mathscr{R}_B^*) \times TI \diffto \phi^{\star}(\mathscr{R}_B^*)
\end{align*}
is a homotopy of ruths.
\end{proof}

For the pullback ruth, we have by the discussion above:

\vspace{0.2cm}
\begin{minipage}{0.9\textwidth}
 \begin{mdframed}[backgroundcolor=blue!5]
\begin{corollary}
Let $\mathscr{R}$ be a ruth on $B$, and let $\phi : A \times TI \to B$ be a morphism of Lie algebroids. Then there is an isomorphism of ruths
\begin{align*}
 \Phi : \phi_0^{\star}(\mathscr{R}) \diffto \phi_1^{\star}(\mathscr{R})
\end{align*}
with the property that the diagram below commutes:
\begin{align*}
 \xymatrix{
 & \mathrm{H}(\mathscr{R}^*) \ar[dl]_{\phi_1^*} \ar[dr]^{\phi_0^*} & \\
 \mathrm{H}(\phi_1^{\star}(\mathscr{R}^*)) \ar[rr]_{\Phi} && \mathrm{H}(\phi_0^{\star}(\mathscr{R}^*)).
 }
\end{align*}

\end{corollary}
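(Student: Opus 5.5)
The plan is to restrict the isomorphism of Theorem \ref{thm : local triviality ruths} to the endpoint $\epsilon=1$, and then compare with the pullback morphisms $\phi_0^*,\phi_1^*$ using the factorization of pullbacks through restriction.

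\emph{Step 1: build $\Phi$.} Theorem \ref{thm : local triviality ruths} gives an isomorphism of ruths over $\mathrm{id}_{A\times TI}$,
\begin{align*}
 \widetilde{\Phi} : \phi_0^{\star}(\mathscr{R}) \times TI \diffto \phi^{\star}(\mathscr{R}),
\end{align*}
that is, an invertible cochain map of $\Omega(A\times TI)$-modules $\Omega(\phi^{\star}(\mathscr{R}^*)) \to \Omega(\phi_0^{\star}(\mathscr{R}^*)\times TI)$. I would restrict it along $i_1 : A \to A\times TI$. Since $\widetilde{\Phi}$ is $C^{\infty}(M\times I)$-linear, it comes from a bundle isomorphism of the coefficient bundles $f^*(\mathcal{E}^*) \cong \mathrm{pr}^*\underline{\phi_0}^*(\mathcal{E}^*)$ together with a $\mathrm{d}\epsilon$-component. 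Pulling back by $i_1$ kills the $\mathrm{d}\epsilon$-part and gives a degree-zero $\Omega(A)$-linear isomorphism
\begin{align*}
 \Phi : \Omega(\phi_1^{\star}(\mathscr{R}^*)) \to \Omega(\phi_0^{\star}(\mathscr{R}^*)).
\end{align*}
This is exactly $\Phi_1$ from Step 1 of that proof, post-composed with $q$. Steps 2--3 of that proof already show that it satisfies MRuth2) and MRuth3).

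To confirm that it is well defined on $\Omega(\phi_1^{\star}(\mathscr{R}^*))$, I would note that $i_1^*\circ(\phi)^* = \phi_1^*$. Lemma \ref{lem : morphisms factor through pullback} then identifies $i_1^*\phi^{\star}(\mathscr{R}) = \phi_1^{\star}(\mathscr{R})$, since pullbacks compose: $(\phi\circ i_1)^{\star} = i_1^{\star}\phi^{\star}$, as in Lemma \ref{lem : pullback morphism factors}. Invertibility comes from the backward flow.

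\emph{Step 2: commutativity in cohomology.} Let $\eta\in\Omega(\mathscr{R}^*)$ be closed. Apply Lemma \ref{lem : stokes} to $\widetilde{\Phi}(\phi^*\eta) \in \Omega(\phi_0^{\star}(\mathscr{R}^*)\times TI)$, which is closed because $\widetilde{\Phi}$ and $\phi^*$ are cochain maps. This gives
\begin{align*}
 i_1^*\widetilde{\Phi}(\phi^*\eta) - i_0^*\widetilde{\Phi}(\phi^*\eta) = \mathrm{D}\smallfint \widetilde{\Phi}(\phi^*\eta),
\end{align*}
exactly as in the proof of Theorem \ref{thm : cohomology}. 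Now identify the two endpoints:
\begin{enumerate}[i)]
 \item $i_1^*\widetilde{\Phi}(\phi^*\eta) = \Phi(\phi_1^*\eta)$ by construction of $\Phi$.
 \item $i_0^*\widetilde{\Phi}(\phi^*\eta) = \phi_0^*\eta$, because $\Phi_0 = q$ is the identity restriction (the flow at time $0$ is the identity).
\end{enumerate}
Hence $\Phi\circ\phi_1^* = \phi_0^*$ in cohomology, which is the claimed commutative triangle.

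\emph{Main obstacle.} The delicate point is the bookkeeping of the restriction, not the homotopy argument. One must check two things:
\begin{enumerate}[i)]
 \item $i_1^*$ of an element of $\Omega(\phi^{\star}(\mathscr{R}^*))$ is meaningful and equals $\phi_1^*$ on pulled-back forms, using the spanning property from Subsection \ref{subsec : Pullbacks of ruths by a Lie algebroid morphism};
 \item $\widetilde{\Phi}$ intertwines $i_1^*$ on both sides. This follows from $C^{\infty}(M\times I)$-linearity and the fact that $i_1^*(\mathrm{d}\epsilon)=0$.
\end{enumerate}
I would verify both on generators $\omega\otimes\phi^*\zeta$ and extend by module linearity.
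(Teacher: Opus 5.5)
Your proposal is correct and follows essentially the paper's route: the paper obtains this corollary ``by the discussion above'', i.e.\ by evaluating the isomorphism $\widetilde{\Phi}$ of Theorem~\ref{thm : local triviality ruths} at $\epsilon=1$ and applying Theorem~\ref{thm : cohomology} to the homotopy $\phi_{\star}\circ\widetilde{\Phi} : \phi_0^{\star}(\mathscr{R})\times TI \to \mathscr{R}$, whose endpoints are $\phi_0^*$ and $\Phi\circ\phi_1^*$, and your Lemma~\ref{lem : stokes} computation is exactly that argument written out. One small imprecision that does not affect the proof: $\widetilde{\Phi}$ is not just a coefficient-bundle isomorphism plus a $\mathrm{d}\epsilon$-part, since it also has higher form-degree components, but your restriction step only uses $\Omega(A\times TI)$-linearity and $i_1^*(\mathrm{d}\epsilon)=0$, so it goes through.
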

\end{mdframed}
\end{minipage}

\section{The fibred picture}\label{sec : The fibred picture}

An {\bf extension} of Lie algebroids $\mathfrak{E} = (B \to \underline{B})$ \cite{Brahic08} is a surjective Lie algebroid map covering a surjective submersion $p_N:N \to \underline{N}$ :
\begin{align*}
 \xymatrix{
 B \ar@{=>}[d] \ar[r]^{P_B} & \underline{B} \ar@{=>}[d]\\
 N \ar[r]_{p_N} & \underline{N}
 }
\end{align*}
A {\bf morphism} $\mathfrak{E}_A \to \mathfrak{E}_B$ between extensions of Lie algebroids
\begin{align*}
 & \xymatrix{
 A \ar@{=>}[d] \ar[r]^{P_A} & \underline{A} \ar@{=>}[d]\\
 M \ar[r]_{p_M} & \underline{M}
 }
 & \xymatrix{
 B \ar@{=>}[d] \ar[r]^{P_B} & \underline{B} \ar@{=>}[d]\\
 N \ar[r]_{p_N} & \underline{N}
 }
\end{align*}
consists of Lie algebroid morphisms
\begin{align*}
 & \phi : A \to B, & \underline{\phi} : \underline{A} \to \underline{B}, 
\end{align*}
for which the diagram below commutes:
\begin{align*}
 \xymatrix{
 A \ar[d]_{P_A} \ar[r]^{\phi} & B \ar[d]^{P_B}\\
 \underline{A} \ar[r]_{\underline{\phi}} & \underline{B}
 }
\end{align*}
An {\bf Ehresmann connection} on $\mathfrak{E}$ is a $C^{\infty}(\underline{N})$-linear map
\begin{align*}
 & \mathrm{hor} : \Gamma(\underline{B}) \to \Gamma(B), 
 & P_B\mathrm{hor} = \mathrm{id}_{\underline{B}}.
\end{align*}
A such Ehresmann connection is {\bf complete} if $\mathrm{hor}(\underline{b})$ is a complete section of $B$ whenever $\underline{b}$ is a complete section of $\underline{B}$. An extension of Lie algebroids which admits a complete Ehresmann connection will be called a {\bf fibration} of Lie algebroids.

\begin{lemma}\label{lem : Pullback of an extension by a morphism}
 Let $\mathfrak{E} = (B \to \underline{B})$ be a extension of Lie algebroids, and let 
  \begin{align*}
  \xymatrix{
  A \ar[r]^{\phi} \ar@{=>}[d] & \underline{B} \ar@{=>}[d]\\
  M \ar[r]_{f} & \underline{N}
  }
 \end{align*}
 be a morphism of Lie algebroids. Then there is an induced {\bf pullback} extension $\phi^*(\mathfrak{E}) = (\phi^*(B) \to A)$, together with a pullback morphism of extensions of Lie algebroids
 \begin{align*}
  \phi_* : \phi^*(\mathfrak{E}) \to \mathfrak{E}.
 \end{align*}
\end{lemma}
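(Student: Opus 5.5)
The plan is to build $\phi^*(B)$ as a fibre product, in the same way the pullback Lie algebroid $f^!(B)$ of Example \ref{ex : Pullback of a Lie algebroid by a transverse map} is built. Since $p_N$ is a surjective submersion, the fibre product of manifolds
\begin{align*}
 f^*(N) := M \times_{\underline{N}} N = \{(x,y) \ | \ f(x) = p_N(y)\}
\end{align*}
is a smooth manifold. The projection $\mathrm{pr}_M : f^*(N) \to M$ is again a surjective submersion, and $\mathrm{pr}_N : f^*(N) \to N$ covers $f$. Over $f^*(N)$ I will consider
\begin{align*}
 \phi^*(B) := A \times_{\underline{B}} B = \{(a,b) \in A_x \times B_y \ | \ \phi(a) = P_B(b)\}.
\end{align*}
First I check that this is a vector bundle. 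The map $(a,b) \mapsto \phi(a) - P_B(b)$ into the pullback of $\underline{B}$ is fibrewise surjective, because $P_B$ is surjective. Hence its kernel has constant rank $\mathrm{rk}A + \mathrm{rk}\,\ker P_B$, and $\phi^*(B)$ is a smooth subbundle of $\mathrm{pr}_M^*A \oplus \mathrm{pr}_N^*B$.

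Next I put a Lie algebroid structure on $\phi^*(B)$. The anchor is $\rho(a,b) = (\rho_A(a),\rho_B(b))$. This lands in $T(f^*(N)) = TM \times_{T\underline{N}} TN$, because $Tf\rho_A(a) = \rho_{\underline{B}}\phi(a) = \rho_{\underline{B}}P_B(b) = Tp_N\rho_B(b)$. For the bracket, the cleanest route is the DGA picture of Section \ref{sec : Lie algebroids}. I regard $\phi^*(B)$ as a subbundle of the product Lie algebroid $A \times B \Rightarrow M \times N$, restricted to the submanifold $f^*(N)$, and use the standard fact that the fibre product of Lie algebroid morphisms $\phi$ and $P_B$ over $\underline{B}$ is a Lie subalgebroid of $A \times B$ whenever it has constant rank and its anchor is tangent to the base. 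Concretely, pairs of sections $(a,b)$ with $\phi(a) = P_B(b)$ are closed under the componentwise bracket, because $\phi$ and $P_B$ are morphisms. Such sections span $\Gamma(\phi^*(B))$ locally as a $C^\infty(f^*(N))$-module, and the Leibniz rule then extends the bracket uniquely. Equivalently, $\Omega(\phi^*(B))$ is the quotient DGA of $\Omega(A \times B)|_{f^*(N)}$ by the differential ideal generated by $\phi^*\underline{\omega} - P_B^*\underline{\omega}$.

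Finally I define the two maps. The extension is $\mathrm{pr}_A : \phi^*(B) \to A$ covering $\mathrm{pr}_M$; it is surjective because $P_B$ is, and it is a Lie algebroid morphism by construction of the bracket. The morphism of extensions is $\phi_* = (\mathrm{pr}_B, \phi)$, and the compatibility $P_B \circ \mathrm{pr}_B = \phi \circ \mathrm{pr}_A$ is precisely the defining equation of the fibre product.

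The main obstacle is the bracket step. One must verify well-definedness and closure in the setting where $\phi$ covers an arbitrary map $f$, so that $\phi$ is not a map of sections. Here I would use the standard description of morphisms over different bases: $\phi(a_i) = \sum f_{ij}\, \underline{b}_j\circ f$, together with the corresponding bracket identity, as in the Higgins–Mackenzie treatment. In the DGA language this amounts to checking that the ideal above is a differential ideal, and that check is what I expect to take the real work.
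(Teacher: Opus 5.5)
Your proposal is correct and follows the same route as the paper: $\phi^*(B)=A\times_{\underline{B}}B$ over $f^*(N)=M\times_{\underline{N}}N$ is realized as a Lie subalgebroid of $A\times B$, and the two projections give the extension $\phi^*(B)\to A$ and the morphism of extensions $\phi_*$. The bracket step you flag as the main obstacle is exactly what the paper delegates to the cited Proposition 7.14, and your preliminary checks (constant rank from fibrewise surjectivity of $P_B$, and the anchor landing in $T(f^*(N))$) are precisely the hypotheses under which that result applies.
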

\begin{proof}
 Consider the pullback submersion
 \begin{align*}
  \xymatrix{
  f^*(N) \ar[r]^{f_*} \ar[d]_{p_{f^*(N)}} & N \ar[d]^{p_N}\\
  M \ar[r]_{f} & \underline{N}
  }
 \end{align*}
 The fibred product $\phi^*(B) := A \times_{\underline{B}}B$ inherits the structure of a Lie subalgebroid $\phi^*(B) \Rightarrow f^*(N)$ of $A \times B \Rightarrow M \times N$ (see \cite[Proposition 7.14]{Eckhard}), and the canonical maps
 \begin{align*}
  & \phi_* : \phi^*(B) \to B,
  & \phi_*(a,b) = b,\\
  & P_{\phi^*(B)} : \phi^*(B) \to A,
  & P_{\phi^*(B)}(a,b)=a
 \end{align*}
 are Lie algebroid morphisms. Then
 \begin{align*}
  \xymatrix{
  \phi^*(B) \ar[r]^{P_{\phi^*(B)}} \ar@{=>}[d] & A \ar@{=>}[d]\\
  f^*(N) \ar[r]_{p_{f^*(N)}} & M
  }
 \end{align*}
 is an extension of Lie algebroids, and
 \begin{align*}
  \xymatrix{
  \phi^*(B) \ar[r]^{\phi_*} \ar[d]_{P_{\phi^*(B)}} & B \ar[d]^{P_B}\\
  A \ar[r]_{\phi} & \underline{B}
  }
 \end{align*}
 is a morphism of extensions of Lie algebroids.
\end{proof}

\begin{example}[Submersions by Lie algebroids]\normalfont
A \emph{submersion by Lie algebroids} \cite{Frejlich}
\begin{align*}
 \mathfrak{S}_B \ : \ B \Rightarrow N \stackrel{p_N}{\rmap} \underline{N},
\end{align*}
consists of a surjective submersion $p_N : N \to \underline{N}$ and a Lie algebroid $B \Rightarrow N$, with the property that each fibre of $p_N$ is transverse to $B$, and a morphism $\phi : \mathfrak{S}_A \to \mathfrak{S}_B$ from another submersion by Lie algebroids
\begin{align*}
 \mathfrak{S}_A \ : \ A \Rightarrow M \stackrel{p_M}{\rmap} \underline{M}
\end{align*}
is a Lie algebroid map $\phi : A \to B$ whose base map $f:M \to N$ is fibred:
\begin{align*}
 \xymatrix{
  M \ar[r]^f \ar[d]_{p_M}
& N \ar[d]^{p_N} \\
  \underline{M} \ar[r]_{\underline{f}}
& \underline{N}
}
\end{align*}
It is easy to see that there is an embedding
\begin{align*}
 && \xymatrix{
  A \ar[r]^{\phi} \ar@{=>}[d]
& B \ar@{=>}[d] \\
  M \ar[r]^f \ar[d]
& N \ar[d] \\
  \underline{M} \ar[r]_{\underline{f}}
& \underline{N}
 }
 && \mapsto
 && \xymatrix{
  A \ar[r]^{\phi} \ar[d]
& B \ar[d] \\
  T\underline{M} \ar[r]_{Tf}
& T\underline{N}
 }
\end{align*}
of the category of submersions by Lie algebroids into the category of extensions of Lie algebroids.
\end{example}

\begin{definition}\normalfont
 A {\bf fibration by ruths} $\mathscr{F}=(\mathfrak{E},\mathscr{R})$ is a fibration of Lie algebroids $\mathfrak{E} = (A \to \underline{A})$, together with a ruth $\mathscr{R} = (\mathrm{D}_{\mathscr{R}} : A \curvearrowright \mathcal{E})$.  A {\bf morphism}
 \begin{align*}
  \Phi : \mathcal{F}_A \to \mathcal{F}_B
 \end{align*}
between fibrations by ruths
 \begin{align*}
  && \mathcal{F}_A = (\mathfrak{E}_A, \mathscr{R}_A),
  && \mathfrak{E}_A = \left( A \to \underline{A} \right),
  && \mathscr{R}_A = (\mathrm{D}_{\mathscr{R}_A} : A \curvearrowright \mathcal{E}_A)\\
  && \mathcal{F}_B = (\mathfrak{E}_B, \mathscr{R}_B),
  && \mathfrak{E}_B = \left( B \to \underline{B} \right),
  && \mathscr{R}_B = (\mathrm{D}_{\mathscr{R}_B} : B \curvearrowright \mathcal{E}_B)
 \end{align*}
is a morphism of ruths
\begin{align*}
 \Phi : \mathscr{R}_A \to \mathscr{R}_B
\end{align*}
covering a morphism of extensions of Lie algebroids $\phi : \mathfrak{E}_A \to \mathfrak{E}_B$. 
\end{definition}

\begin{example}[Pullback of a fibration by ruths by a Lie algebroid morphism]\label{ex : Pullback of a fibration by ruths by a Lie algebroid morphism}\normalfont
 Let $\mathscr{F}=(\mathfrak{E},\mathscr{R})$ be the fibration by ruths
 \begin{align*}
  & \xymatrix{
 B \ar@{=>}[d] \ar[r]^{P_B} & \underline{B} \ar@{=>}[d]\\
 N \ar[r]_{p_N} & \underline{N}
 }
 & \mathscr{R} = (\mathrm{D}_{\mathscr{R}} : B \curvearrowright \mathcal{E})
 \end{align*}
and let
 \begin{align*}
  \xymatrix{
  A \ar@{=>}[d] \ar[r]^{\phi} & \underline{B} \ar@{=>}[d]\\
  M \ar[r]_{f} & \underline{N}
  }
 \end{align*}
 be a Lie algebroid morphism. Then
\begin{align*}
 \phi^*(B):= A \times_{\underline{B}} B \Rightarrow f^*(N) = M \times_{\underline{N}}N
\end{align*}
is a Lie algebroid \cite[Proposition 7.14]{Eckhard},
\begin{align*}
  \xymatrix{
 \phi^*(B) \ar@{=>}[d] \ar[r]^{P_{\phi^*(B)}} & A \ar@{=>}[d]\\
 f^*(N) \ar[r]_{p_{f^*(N)}} & M
 }
 \end{align*}
 is a Lie algebroid fibration, where $P_A(a,b)=a$, $p_M(x,y)=x$ and
\begin{align*}
  \xymatrix{
 \phi^*(B) \ar[d] \ar[r]^{\phi_*} & B \ar[d]\\
 A \ar[r]_{\phi} & \underline{B}
 }
 \end{align*}
 is a morphism of Lie algebroid fibrations, where $\phi_*(a,b)=b$. There is then an induced fibration by ruths $(\phi_*)^{\star}(\mathscr{F})$
 \begin{align*}
  & \xymatrix{
 \phi^*(B) \ar@{=>}[d] \ar[r]^{P_B} & A \ar@{=>}[d]\\
 f^*(N) \ar[r]_{\phantom{12}p_{f^*(N)}} & M
 }
 & (\phi_*)^{\star}(\mathscr{R}) = (\mathrm{D}_{(\phi_*)^{\star}(\mathscr{R})} : \phi^*(B) \curvearrowright f^*(N)\times_N\mathcal{E})
 \end{align*}
and a morphism of fibrations by ruths
\begin{align*}
 (\phi_*)_{\star} : (\phi_*)^{\star}(\mathscr{R}) \to \mathscr{R}.
\end{align*}
\end{example}

\vspace{0.2cm}
\begin{minipage}{0.9\textwidth}
 \begin{mdframed}[backgroundcolor=blue!5]
\begin{proposition}\label{pro : local triviality of fibration by ruths under lie homotopy}
Let $\mathcal{F}=(B \to \underline{B},\mathrm{D}_{\mathscr{R}}:B \curvearrowright \mathcal{E})$ be a fibration by ruths, and let
\begin{align*}
 \phi : A \times TI \to \underline{B}
\end{align*}
be a Lie algebroid morphism. Then the fibrations by ruths
\begin{align*}
 && (\phi_*)^{\star}(\mathcal{F}) && \text{and} && ({\phi_0}_*)^{\star}(\mathcal{F}) \times TI
\end{align*}
are isomorphic.
\end{proposition}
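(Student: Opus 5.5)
The plan is to reduce the statement to the flow argument behind Theorem \ref{thm : local triviality ruths}, applied to the pulled-back Lie algebroid $\phi^*(B) = (A\times TI)\times_{\underline{B}} B \Rightarrow f^*(N)$, where $f : M\times I \to \underline{N}$ is the base map of $\phi$. The new difficulty is that the section $\tfrac{\partial}{\partial\epsilon}$ lives in $A\times TI$, not in $\phi^*(B)$, so it must be lifted. The fibration hypothesis supplies the lift: fix a complete Ehresmann connection $\mathrm{hor} : \Gamma(\underline{B})\to\Gamma(B)$ on $\mathfrak{E}$, and pull it back to an Ehresmann connection on the extension $\phi^*(\mathfrak{E}) = (\phi^*(B)\to A\times TI)$ of Lemma \ref{lem : Pullback of an extension by a morphism}. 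Concretely, I will define
\begin{align*}
 \widehat{a} := \left(\tfrac{\partial}{\partial \epsilon}, \mathrm{hor}(\phi(\tfrac{\partial}{\partial\epsilon}))\right) \in \Gamma(\phi^*(B)).
\end{align*}
Since $\phi(\tfrac{\partial}{\partial\epsilon})$ is only a section of $\underline{B}$ along $f$, I will write it locally as $\sum_k g_k\, \underline{b}_k\circ f$ and set the lift to be $\sum_k g_k\,\mathrm{hor}(\underline{b}_k)$, using $C^\infty(\underline{N})$-linearity of $\mathrm{hor}$ to see that this is well defined. The first step is to check that $\widehat{a}$ projects to $\tfrac{\partial}{\partial\epsilon}$ under $P_{\phi^*(B)}$.

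Next I will run Steps 1--4 of the proof of Theorem \ref{thm : local triviality ruths} with $\widehat{a}$ in place of $\tfrac{\partial}{\partial\epsilon}$. This requires the derivation $\mathrm{L}_{\widehat{a}} = [i_{\widehat{a}}, \mathrm{D}_{(\phi_*)^{\star}(\mathscr{R}^*)}]$ on $\Omega((\phi_*)^{\star}(\mathscr{R}^*))$, its linear vector field $\mathcal{V}$ on $\mathcal{A} = \bigoplus \wedge^i \phi^*(B)^*\otimes (\mathcal{E}^*)_j$ pulled back, and the flow of $\mathcal{V}$ starting from the slice $\epsilon = 0$. Because $\rho(\widehat{a})$ covers $\tfrac{\partial}{\partial\epsilon}$ on $M\times I$, its flow maps the fibre $\{\epsilon=0\}$ of $f^*(N)$ to the fibre $\{\epsilon = t\}$. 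The flow of $\widehat{a}$ as in Example \ref{ex : Flow of a section of a Lie algebroid} then gives Lie algebroid isomorphisms between the restrictions of $\phi^*(B)$ over $\{\epsilon=0\}$ and over $\{\epsilon=t\}$, and these restrictions are ${\phi_0}^*(B)$ and ${\phi_t}^*(B)$. As in Steps 2--3 of Theorem \ref{thm : local triviality ruths}, I will use uniqueness of solutions to $\dot\gamma = \mathrm{L}_{\widehat a}\gamma$ to show that the flow of $\mathcal{V}$ intertwines the module structures and the differentials. Step 4 of Theorem \ref{thm : local triviality ruths} then assembles these maps into
\begin{align*}
 \widetilde{\Phi}(\eta) = \Phi(\eta) + \mathrm{d}\epsilon\wedge\Phi(i_{\widehat a}\eta).
\end{align*}
The same computation shows that $\widetilde\Phi$ is an isomorphism of ruths from $({\phi_0}_*)^{\star}(\mathcal F)\times TI$ to $(\phi_*)^{\star}(\mathcal F)$. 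Its underlying Lie algebroid map is $\psi + \widehat a\,\mathrm{d}\epsilon$, where $\psi$ is the flow of $\widehat a$. To finish, I will check that $\psi + \widehat a\,\mathrm{d}\epsilon$ commutes with the projections to $A\times TI$, because $\widehat a$ projects to $\tfrac{\partial}{\partial\epsilon}$, so that $\widetilde\Phi$ is a morphism of fibrations.

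I expect the main obstacle to be completeness. The flow of $\widehat a$ must exist for all $\epsilon\in[0,1]$ starting from the fibre over $\epsilon=0$. The definition of completeness only covers $\mathrm{hor}(\underline b)$ for complete $\underline b$, whereas $\widehat a$ is built from $\phi(\tfrac{\partial}{\partial\epsilon})$, which is a section along $f$ and not a section of $\underline B$. I would first try to avoid this by working on the base $M\times I$ directly. Since $\rho(\widehat a)$ projects to $\tfrac{\partial}{\partial\epsilon}$, an integral curve can only escape in the fibre direction. Along each curve $\epsilon\mapsto(x,\epsilon)$, the horizontal lift of the curve $\epsilon\mapsto \phi(\tfrac{\partial}{\partial\epsilon})_{(x,\epsilon)}$ in $\underline B$ should be controlled by completeness of $\mathrm{hor}$, after extending this curve to a complete time-dependent section of $\underline B$ with a cutoff. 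If this fails in general, I would restrict the claim to a neighbourhood on which the flow is defined, as in the treatment of the local flow in Step 1 of Theorem \ref{thm : local triviality ruths}.
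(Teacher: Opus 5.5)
Your proposal is correct in outline. Its key ingredient, the horizontal lift $\widehat a$ of $\tfrac{\partial}{\partial\epsilon}$ to $\phi^*(B)$ and its flow out of the slice $\epsilon=0$, is exactly the paper's $\mathrm{hor}_{A\times TI}(\tfrac{\partial}{\partial\epsilon})$. What differs is how you decompose the argument.

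The paper uses this flow only at the Lie algebroid level. It produces an isomorphism of fibrations $\psi:\phi_0^*(B)\times TI\diffto\phi^*(B)$ over $\mathrm{id}_{A\times TI}$, which in your notation is the flow plus $\widehat a\,\mathrm{d}\epsilon$ and restricts to the inclusion at $\epsilon=0$. It then applies Theorem~\ref{thm : local triviality ruths} verbatim to the Lie algebroid homotopy $\phi_*\circ\psi:\phi_0^*(B)\times TI\to B$, whose $0$-end is ${\phi_0}_*$. This gives $\Phi':({\phi_0}_*)^{\star}(\mathscr{R})\times TI\diffto\psi^{\star}(\phi_*)^{\star}(\mathscr{R})$, which the paper composes with the invertible pullback morphism $\psi_{\star}$.

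You instead flow $\mathrm{L}_{\widehat a}$ directly on $\Omega((\phi_*)^{\star}(\mathscr{R}^*))$. That amounts to re-proving a hybrid of the Proposition opening Section~\ref{sec : Local triviality} and Theorem~\ref{thm : local triviality ruths}, and two details change because the flow now moves the base:
\begin{enumerate}[i)]
\item Your MRuth2) must read $\Phi_\epsilon(\omega\wedge\eta)=\psi_\epsilon^*(\omega)\wedge\Phi_\epsilon(\eta)$, where $\psi_\epsilon$ is the time-$\epsilon$ flow. It is not $\omega\wedge\Phi_\epsilon(\eta)$ as written in Step 2.
\item The algebroids over the slices that the flow identifies are the pullbacks $\phi_t^*(B)$, namely the part of $\phi^*(B)|_{\epsilon=t}$ with vanishing $TI$-component. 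They are not the full restrictions, which also contain $\widehat a$. The flow preserves them because it covers translation on $A\times TI$.
\end{enumerate}
Since $\psi$ carries $\tfrac{\partial}{\partial\epsilon}$ to $\widehat a$, it intertwines $\mathrm{L}_{\partial/\partial\epsilon}$ with $\mathrm{L}_{\widehat a}$, and the two constructions produce the same isomorphism. The paper's route is more economical: beyond Theorem~\ref{thm : local triviality ruths} it needs only $(\phi_*\circ\psi)^{\star}=\psi^{\star}(\phi_*)^{\star}$ and invertibility of $\psi_{\star}$. Yours gives the explicit formula $\widetilde\Phi(\eta)=\Phi(\eta)+\mathrm{d}\epsilon\wedge\Phi(i_{\widehat a}\eta)$ over $\psi+\widehat a\,\mathrm{d}\epsilon$ in one pass.

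On completeness, the paper simply asserts that $\mathrm{hor}$ induces a complete connection on $\phi^*(B)\to A\times TI$ whose flow yields $\psi$ on all of $I$. So your concern is shared by the paper, not a defect relative to it. Be aware of two limits of your proposed remedies, though. Your cutoff idea produces a \emph{time-dependent} section of $\underline B$, and the stated definition of completeness, which concerns autonomous complete sections, does not directly cover it. Your fallback of restricting to the domain of the flow would not give the global isomorphism the statement claims.
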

\end{mdframed}
\end{minipage}
\begin{proof}
 Consider the pullback diagram of Lie algebroids
 \begin{align*}
  \xymatrix{
    \phi^*(B) \ar[r]^{\phi_*} \ar[d]_{P_{\phi^*(B)}}
  & B \ar[d]^{P_B} \\
    A \times TI \ar[r]_{\phi}
  & \underline{B}
  }
 \end{align*}
The complete connection $\mathrm{hor}_{\underline{B}} : p_N^*(\underline{B}) \to B$ induces a complete connection
\begin{align*}
 & \mathrm{hor}_{A \times TI} : p_{f^*(N)}(A \times TI) \to \phi^*(B),
 & \mathrm{hor}_{A \times TI}(a):=(a,\mathrm{hor}_{\underline{B}}(\phi(a)))
\end{align*}
and the local flow of $\mathrm{hor}_{A \times TI}(\tfrac{\partial}{\partial \epsilon})$ defines an isomorphism of fibrations
\begin{align*}
\xymatrix{
  \phi_0^*(B) \times TI \ar[r]^{\psi}_{\simeq} \ar[d]_{(P_{\phi_0^*(B)},\mathrm{id})} 
& \phi^*(B) \ar[d]^{P_{\phi^*(B)}} \\
  A \times TI \ar@{=}[r]
& A \times TI
}
\end{align*}
which restricts to the inclusion $\phi_0^*(B) \to \phi^*(B)$ at $\epsilon = 0$. Consider the pullback ruth
\begin{align*}
 (\phi_*)^{\star}(\mathscr{R}) \ : \ \phi^*(B) \curvearrowright f^*(N) \times_N \mathcal{E}.
\end{align*}
Then
\begin{align*}
 ({\phi_0}_*)^{\star}(\mathscr{R}) = \psi_0^{\star}(\phi_*)^{\star}(\mathscr{R})
\end{align*}
and by Theorem \ref{thm : local triviality ruths}, there is an isomorphism of ruths
\begin{align*}
 \Phi' : ({\phi_0}_*)^{\star}(\mathscr{R}) \times TI \diffto \psi^{\star}(\phi_*)^{\star}(\mathscr{R}).
\end{align*}
over $\mathrm{id} : A \times TI \to A \times TI$. Therefore
\begin{align*}
 \Phi = \psi_{\star} \circ \Phi' : ({\phi_0}_*)^{\star}(\mathscr{R}) \times TI \diffto (\phi_*)^{\star}(\mathscr{R})
\end{align*}
is an isomorphism of fibrations by ruths
\begin{align*}
 \Phi : ({\phi_0}_*)^{\star}(\mathcal{F}) \times TI \diffto (\phi_*)^{\star}(\mathcal{F}).
\end{align*}
\end{proof}

A fibration by ruths $\mathcal{F}=(\mathfrak{E},\mathscr{R})$,
\begin{align*}
 & \mathfrak{E} \ = \ 
 \xymatrix{
  A \ar[r]^{P_A} \ar[d]
& \underline{A} \ar[d] \\
  M \ar[r]_{p_M}
& \underline{M}
 }
 & \mathscr{R} = A \curvearrowright \mathcal{E}
\end{align*}
is {\bf locally trivial} if every point $x \in \underline{M}$ has an open neighborhood $U \subset \underline{M}$ for which there is an isomorphism of fibrations by ruths
\begin{align*}
 (({i_x}_!)_*)^{\star}(\mathscr{R}) \times \underline{A}|_U \diffto (({i_U}_!)_*)^{\star}.(\mathscr{R}).
\end{align*}

Generalizing \cite[Theorem 3]{Frejlich}, we have:

\vspace{0.2cm}
\begin{minipage}{0.9\textwidth}
 \begin{mdframed}[backgroundcolor=blue!5]
\begin{corollary}
A submersion by ruths $\mathcal{F}=(\mathfrak{E},\mathscr{R})$ is locally trivial exactly when the underlying submersion by Lie algebroids $\mathfrak{E}$ is locally trivial.
\end{corollary}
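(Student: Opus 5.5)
One direction is formal. Suppose the submersion by ruths $\mathcal{F}=(\mathfrak{E},\mathscr{R})$ is locally trivial. Then over each $U$ we have an isomorphism of fibrations by ruths. By definition, such an isomorphism covers an isomorphism of the underlying extensions of Lie algebroids. Forgetting the ruth data therefore yields local trivializations of $\mathfrak{E}$, so $\mathfrak{E}$ is locally trivial.

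For the converse, assume $\mathfrak{E}$ is locally trivial, i.e.\ $(B\Rightarrow N\to\underline{N})$ is locally trivial as a submersion by Lie algebroids. By the embedding of submersions by Lie algebroids into extensions (with $\underline{B}=T\underline{N}$), this means the following. Near each $x\in\underline{N}$ there is a chart $U$, which we take to be a coordinate ball centred at $x$, together with an isomorphism of extensions between $i_U^\bullet(B)$ and $i_x^\bullet(B)\times TU$ over $\mathrm{id}_U$. Local triviality of the Lie algebroid part also supplies a complete Ehresmann connection on $i_U^\bullet(B)\to TU$, so the restriction of $\mathcal{F}$ to $U$ is a fibration by ruths in the sense of the previous section.

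The key step is to realise $TU$ as the target of a Lie homotopy and apply Proposition \ref{pro : local triviality of fibration by ruths under lie homotopy}. Let $h:U\times I\to U$ be the radial contraction $h(y,\epsilon)=\epsilon y$, so that $h_0\equiv x$ and $h_1=\mathrm{id}_U$. Its tangent gives a Lie algebroid morphism
\[
\phi=Th:TU\times TI\to TU .
\]
Apply Proposition \ref{pro : local triviality of fibration by ruths under lie homotopy} to the fibration by ruths over $U$, with $A=TU$. This gives an isomorphism
\[
(\phi_*)^\star(\mathcal{F}|_U)\cong({\phi_0}_*)^\star(\mathcal{F}|_U)\times TI .
\]
Now restrict both sides to $\epsilon=1$ using $i_1$. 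On the left, since $\phi_1=\mathrm{id}_{TU}$, we get $\mathcal{F}|_U=(({i_U}_!)_*)^\star(\mathscr{R})$. On the right, since $\phi_0$ factors through $T\{x\}$, we get $({\phi_0}_*)^\star(\mathcal{F}|_U)$. I will identify the latter with $(({i_x}_!)_*)^\star(\mathscr{R})\times TU$ by using Lemma \ref{lem : pullback morphism factors} and the composition rule for pullbacks: pulling back along the constant map is pulling back to the fibre over $x$ and then along the projection $TU\to 0_x$.

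The remaining point is that restricting to $\epsilon=1$ preserves isomorphisms of ruths. The isomorphism $\widetilde\Phi$ built in Theorem \ref{thm : local triviality ruths} has components $\Phi_\epsilon$ that are isomorphisms for every $\epsilon$, and the same holds for the Ehresmann flow $\psi$. So $\Phi_1$ is an isomorphism of fibrations by ruths over $\mathrm{id}_U$, which is exactly the required local trivialization.

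The main obstacle is making sure the completeness hypothesis holds, so that the flows are defined up to time $1$. I expect this to be handled by shrinking $U$: the lifted horizontal section $\mathrm{hor}(\partial_\epsilon)$ projects to the complete field $\partial_\epsilon$ on $I$, and a local trivialization of $\mathfrak{E}$ provides a product connection whose horizontal lifts are complete. A second point to check is that the identifications of iterated pullbacks are compatible with the dual-coefficient morphisms. Since all maps involved are of the form $\phi^*$ on generators $\zeta\in\Gamma(\mathcal{E}^*)$, the uniqueness in the definition of the pullback ruth should settle this.
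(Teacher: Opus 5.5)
Your proposal is correct and follows essentially the same route as the paper: take a contractible neighbourhood $U$ of $x$ with a contraction $h$ from the constant map at $x$ to $\mathrm{id}_U$, apply Proposition~\ref{pro : local triviality of fibration by ruths under lie homotopy} to the Lie homotopy $Th : TU\times TI\to TU$, and identify the endpoint pullbacks with $(({i_x}_!)_*)^{\star}(\mathscr{R})\times TU$ and $(({i_U}_!)_*)^{\star}(\mathscr{R})$. You also make explicit three points the paper leaves implicit: the formal forward direction, the restriction to $\epsilon=1$, and the fact that the complete Ehresmann connection required by the Proposition is the product connection coming from a local trivialization of $\mathfrak{E}$ (the paper simply assumes such a connection exists).
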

\end{mdframed}
\end{minipage}
\begin{proof}
 Let $\mathfrak{E} = (A \to T\underline{M})$ be a submersion by Lie algebroids, and let $\mathscr{R} : A \curvearrowright \mathcal{E}$ be a ruth. Then every $x \in \underline{M}$ has a \emph{contractible} open neighborhood $U \subset \underline{M}$, that is, one for which there is a smooth homotopy
 \begin{align*}
  && h : U \times I \to U,
  && h(y,0)=x,
  && h(y,1)=y.
 \end{align*}
Observe that
\begin{align*}
 & ({{h_0}_!}_*)^{\star}(\mathscr{R}) = ({{i_x}_!}_*)^{\star}(\mathscr{R}) \times TU,
 & ({{h_1}_!}_*)^{\star}(\mathscr{R}) = ({{i_U}_!}_*)^{\star}(\mathscr{R}).
\end{align*}
If $\mathfrak{E}$ has a complete Ehresmann connection, then by Proposition \ref{pro : local triviality of fibration by ruths under lie homotopy}, there is an isomorphism of submersions by ruths \begin{align*}
 ({{i_x}_!}_*)^{\star}(\mathscr{R}) \times TU \diffto ({{i_U}_!}_*)^{\star}(\mathscr{R}).
\end{align*}
\end{proof}

\section{Remarks}\label{sec : Remarks}

\addtocounter{litreview}{1}

\noindent {\textbf {\Roman{litreview})}} \label{lit : morphism} One of the novelties in the paper concerns the definition of ``morphism of ruths'' in the case where the underlying Lie algebroids are defined over different manifolds, Definition \ref{def : Morphism of ruths}. In the foundational paper \cite{Abad_Crainic}, a \emph{morphism} $\mathscr{R}_E \to \mathscr{R}_F$ between ruths
\begin{align*}
 & \mathscr{R}_{\mathcal{E}} \ : \ A \overset{\mathrm{D}_{\mathscr{R}_{\mathcal{E}}}}{\curvearrowright} \mathcal{E},
 & \mathscr{R}_{\mathcal{F}} \ : \ A \overset{\mathrm{D}_{\mathscr{R}_{\mathcal{F}}}}{\curvearrowright} \mathcal{F}
\end{align*}
over the same Lie algebroid $A \Rightarrow M$ is a defined as a map of DGA modules
\begin{align}\label{eq : covariant morphism, same base}
 \Psi : \Omega(\mathscr{R}_{\mathcal{E}}) \to \Omega(\mathscr{R}_{\mathcal{F}}),
\end{align}
that is, a linear map of degree zero for which
\begin{align*}
 & \Psi(\omega \wedge \eta) = \omega \wedge \Psi(\eta),
 & \Psi(\mathrm{D}_{\mathscr{R}_{\mathcal{E}}}\eta) = \mathrm{D}_{\mathscr{R}_{\mathcal{F}}}\Psi(\eta)
\end{align*}
for all $\omega \in \Omega(A)$ and $\eta \in \Omega(\mathscr{R}_{\mathcal{E}})$. Observe that there is a bijective correspondence between $\Omega(A)$-module maps as in \eqref{eq : covariant morphism, same base} and $\Omega(A)$-module maps
\begin{align*}
 \Psi^{\vee} : \Omega(\mathscr{R}_{\mathcal{F}}^*) \to \Omega(\mathscr{R}_{\mathcal{E}}^*),
\end{align*}
determined by the condition that
\begin{align*}
 \langle \Psi(\eta),\tau \rangle = \langle \eta,\Psi^{\vee}(\tau) \rangle
\end{align*}
for all $\eta \in \Omega(\mathscr{R}_{\mathcal{E}})$ and $\tau \in \Omega(\mathscr{R}_{\mathcal{F}}^*)$. Because
\begin{align*}
 \langle \eta,\mathrm{D}_{\mathscr{R}_{\mathcal{E}}^*}\Psi^{\vee}(\tau)\rangle - \langle \eta,\Psi^{\vee}\mathrm{D}_{\mathscr{R}_{\mathcal{F}}^*}(\tau)\rangle & = \langle \mathrm{D}_{\mathscr{R}_{\mathcal{F}}}\Psi(\eta),\tau\rangle - \langle \Psi\mathrm{D}_{\mathscr{R}_{\mathcal{E}}}(\eta),\tau\rangle,
\end{align*}
$\Psi$ is a cochain map exactly when $\Psi^{\vee}$ is a cochain map. Therefore this ``covariant'' perspective selects the same morphisms as our ``contravariant'' proposal for ``morphism of ruths'' (Definition \ref{def : Morphism of ruths}). As in Remark \ref{rem : dual coefficients}, the contravariant definition is unavoidable if one is to consider morphisms betweens ruths defined on different manifolds.\\

\noindent \lit A second novelty in the manuscript concerns the notion in subsection \ref{subsec : Pullbacks of ruths by a Lie algebroid morphism} of ``pullback'' 
\begin{align*}
\phi^{\star}(\mathscr{R}) \ : \ A \overset{\mathrm{D}_{\phi^{\star}(\mathscr{R})}}{\curvearrowright} \underline{\phi}^*(\mathcal{E} )
\end{align*}
of a ruth
\begin{align*}
\mathscr{R} \ : \ B \overset{\mathrm{D}_{\mathscr{R}}}{\curvearrowright} \mathcal{E} 
\end{align*}
under an arbitrary Lie algebroid map
\begin{align*}
 \xymatrix{
 A \ar@{=>}[d] \ar[r]^{\phi} & B \ar@{=>}[d] \\
 M \ar[r]_{f} & N
 }
\end{align*}
The case of the pullback $(f_!)^{\star}(\mathscr{R})$ of a ruth by a smooth map $f:M \to N$ transverse to the Lie algebroid $B$ appeared in \cite[Example 4.3]{Abad_Crainic}. In \cite[Section 2.2]{Drummond_Jotz_Ortiz}, pullbacks are considered in the case where $\phi : A \to B$ is a Lie algebroid map covering the identity $f=\mathrm{id}$:
\begin{align*}
 \xymatrix{
 A \ar@{=>}[d] \ar[r]^{\phi} & B \ar@{=>}[d] \\
 M \ar@{=}[r] & M
 }
\end{align*}

\noindent \lit In the spirit of \cite{Abad_Crainic}, a ``covariant'' version can be given of our proposal of morphism $\mathscr{R}_A \to \mathscr{R}_B$ between ruths on different bases, in light of Lemma \ref{lem : morphisms factor through pullback}: it consists of a Lie algebroid morphism $\phi : A \to B$, together with a homomorphism of modules of DGAs:
\begin{align}\label{eq : covariant morphism}
 \Psi : \Omega(\mathscr{R}_A) \to \Omega(\phi^{\star}(\mathscr{R}_B))
\end{align}
over $\mathrm{id} : A \to A$:
\begin{align*}
 & \Psi(\omega \wedge \eta) = \omega \wedge \Psi(\eta),
 & \Psi(\mathrm{D}_{\mathscr{R}_A} \eta) = \mathrm{D}_{\phi^{\star}(\mathscr{R}_B)} \Psi(\eta)
\end{align*}
for all $\omega \in \Omega(A)$ and $\eta \in \Omega(\mathscr{R}_A)$. To a such homomorphism \eqref{eq : covariant morphism} there corresponds the composition $\Phi$ of
\begin{align*}
 \Omega(\mathscr{R}_B^*) \stackrel{\phi^*}{\rmap} \Omega(\phi^{\star}(\mathscr{R}_B^*)) \stackrel{\Psi^{\vee}}{\rmap} \Omega(\mathscr{R}_A^*),
\end{align*}
which gives a morphism of ruths
\begin{align}\label{eq : contravariant morphism}
 \Phi : \mathscr{R}_A \to \mathscr{R}_B
\end{align}
in our ``contravariant'' proposal. We find the ``contravariant'' approach more desirable, in that a morphism of ruths \eqref{eq : contravariant morphism} gives a map
\begin{align*}
 \Phi : \Omega(\mathscr{R}_B^*) \to \Omega(\mathscr{R}_A^*),
\end{align*}
whereas no map $\Omega(\mathscr{R}_A) \to \Omega(\mathscr{R}_B)$ is implied. Rather, there is an implied relation $\mathrm{Rel} \subset \Omega(\mathscr{R}_A) \times \Omega(\mathscr{R}_B)$, exactly as in Remark \ref{rem : dual coefficients}.\\

\noindent \lit That complete homotopies give rise to Lie algebroid isomorphisms appears in \cite[Proposition 1.1.15]{Pastina_Vitagliano}, where one also finds a proof of homotopy invariance of deformation cohomology, which is a special case of Theorem \ref{thm : cohomology} (see \cite[Theorem 3.11]{Abad_Crainic}).\\

\noindent \lit Theorem \ref{thm : cohomology} has a large intersection with the homotopy invariance of morphisms of $L_{\infty}$-algebroids of \cite{Gengoux_Lavau_Strobl,Caseiro_Gengoux} --- at least in the case where the homotopies are between morphisms covering the identity --- via the semidirect construction of \cite[Proposition 13]{Caseiro_Gengoux}.

\end{document}